\documentclass[12pt,reqno]{amsart}
\usepackage{amsmath}
\usepackage{amssymb}
\usepackage{amstext}
\usepackage{bm}
\usepackage{physics}
\usepackage{a4wide}
\usepackage{txfonts}
\usepackage{graphicx}
\usepackage[numbers,sort&compress]{natbib}
\allowdisplaybreaks \numberwithin{equation}{section}
\usepackage{color}
\usepackage{cases}
\usepackage{comment}
\usepackage{enumitem}
\usepackage{booktabs}
\usepackage{tabularx}
\usepackage{array}
\usepackage{caption}
\usepackage{subcaption}
\graphicspath{{images/}}
\numberwithin{equation}{section}

\newtheorem{theorem}{Theorem}[section]
\newtheorem{proposition}[theorem]{Proposition}
\newtheorem{corollary}[theorem]{Corollary}
\newtheorem{lemma}[theorem]{Lemma}
\newtheorem*{theoremA}{Theorem A}

\theoremstyle{definition}

\theoremstyle{remark}
\newtheorem{remark}[theorem]{Remark}

\begin{document}

	\title
	[Quantitative Stability of First Laplacian Eigenstates]{Quantitative Stability of First Laplacian Eigenstates for the Incompressible Euler Equation on a Flat 2-Torus}

	\author{Fatao Wang}
	\address{School of Mathematical Sciences, Dalian University of Technology, Dalian 116024, PR China}
	\email{wangfatao@mail.dlut.edu.cn}
	
	\author{Guodong Wang}
	\address{School of Mathematical Sciences, Dalian University of Technology, Dalian 116024, PR China}
	\email{gdw@dlut.edu.cn}

	\author{Xiaoxue Zhu}
	\address{School of Mathematical Sciences, Dalian University of Technology, Dalian 116024, PR China}
	\email{zxx12501032@mail.dlut.edu.cn}

	
	\begin{abstract}
In this paper, we establish quantitative estimates for the orbital stability of the first Laplacian eigenstates of the incompressible Euler equation on a two-dimensional flat torus. We focus mainly on the hexagonal torus, where the first Laplacian eigenspace has a more intricate structure and the Casimir functionals may exhibit strong degeneracy at special amplitude and phase configurations.
The main novelty of the proof is to reduce the estimates for the amplitude parameters of the perturbed solution to a root-stability problem for a cubic polynomial under coefficient perturbations, thereby overcoming the strong degeneracy in an effective way.
These estimates appear to indicate that stronger degeneracy in the amplitude-phase configuration leads to weaker stability.
\end{abstract}

	\maketitle
	\tableofcontents
	\section{Introduction}\label{sec1}
	\subsection{Motivation}
	
	Let $\Lambda$ be a two-dimensional lattice; that is, there exist two linearly independent vectors $\bm{\xi}, \bm{\eta}\in\mathbb R^2$ such that
	\[
	\Lambda = \left\{ m \bm{\xi} + n \bm{\eta} \mid m, n \in \mathbb{Z} \right\}.
	\]
	The pair $(\bm{\xi}, \bm{\eta})$ is called a basis of $\Lambda$.
	Consider the flat 2-torus $\mathbb T_\Lambda:=\mathbb R^2/\Lambda$ and the incompressible Euler equation on $\mathbb T_\Lambda$ in vorticity form:
	\begin{equation}\label{euler}
		\left\{
		\begin{aligned}
			&\partial_t \omega + \nabla^\perp\psi \cdot \nabla \omega = 0,
			\qquad t \in \mathbb R,\ \mathbf x = (x_1,x_2) \in \mathbb T_\Lambda,\\
			&\psi=  (-\Delta)^{-1}\omega,
		\end{aligned}
		\right.
	\end{equation}
	where $\omega$ is the scalar vorticity, $\psi$ is the stream function, and $\nabla^\perp:= (\partial_{x_2}, -\partial_{x_1})$. Note that $\omega$ is assumed to have mean zero to ensure that $\psi$ is well defined. The global well-posedness theory for \eqref{euler} has been well developed in various function spaces \cite{Be84,De9125,Di87,Yu63}. In particular, global well-posedness holds in the class of mean-zero $C^1$ functions.
	However, a comprehensive theory describing the evolution of solutions remains far from complete \cite[Sect.~3]{Drivas23}.
	Among the many problems concerning evolution, a fundamental one is to study the long-time behavior of solutions near various steady states.

	In this paper, we study the Lyapunov stability of a class of steady states, referred to as \emph{Laplacian eigenstates}, whose stream functions are given by eigenfunctions of the following Laplacian eigenvalue problem:
	\begin{equation}\label{lep}
		\begin{cases}
			-\Delta u = \lambda u, & \mathbf{x} \in \mathbb T_\Lambda,\\
			\displaystyle \int_{\mathbb T_\Lambda} u \, \dd{\mathbf{x}} = 0.
		\end{cases}
	\end{equation}
	Note that all eigenfunctions of \eqref{lep} are sinusoidal and can be obtained by computing the dual lattice of $\Lambda$. For the first eigenspace, denoted by $\mathbf E_1$, the following properties hold  (see \cite[Sect.~2]{W252}):
	\begin{itemize}
		\item[(1)] The dimension of $\mathbf E_1$ is either $2$, $4$, or $6$.
		
		\item[(2)] $\dim(\mathbf{E}_1) = 6$ if and only if $\mathbb T_\Lambda$ is a \emph{hexagonal torus}; that is,  $\Lambda$ admits a basis $(\bm{\xi}, \bm{\eta})$ such that
		\[
		|\bm{\xi}| = |\bm{\eta}|, \quad \angle(\bm{\xi}, \bm{\eta}) = \pi/3.
		\]
		
		\item[(3)] If $\mathbb T_\Lambda$ is a hexagonal torus, then there exist three nonzero vectors $\mathbf{k}_1, \mathbf{k}_2, \mathbf{k}_3$ satisfying
		\[
		|\mathbf{k}_1| = |\mathbf{k}_2| = |\mathbf{k}_3|,
		\quad
		\mathbf{k}_3 = \mathbf{k}_1 + \mathbf{k}_2,
		\]
		such that
		\begin{equation}\label{e1df}
		\mathbf{E}_1 = \operatorname{span}\left\{
		\cos(\mathbf{k}_1 \cdot \mathbf{x}),\, \sin(\mathbf{k}_1 \cdot \mathbf{x}),\,
		\cos(\mathbf{k}_2 \cdot \mathbf{x}),\, \sin(\mathbf{k}_2 \cdot \mathbf{x}),\,
		\cos(\mathbf{k}_3 \cdot \mathbf{x}),\, \sin(\mathbf{k}_3 \cdot \mathbf{x})
		\right\}.
		\end{equation}
	\end{itemize}

	For the square torus, orbital stability (up to translations) of the first Laplacian eigenstates has been established in \cite{E24,WZ23,WS99}, and has recently been extended to flat $2$-tori of \emph{arbitrary shape} in \cite{W252}.
	
	\begin{theoremA}[\cite{W252}]
		Fix $1<p<\infty$ and $\bar\omega \in \mathbf E_1$. Then, for any $\varepsilon>0$, there exists $\delta>0$, depending on $\bar\omega$ and $\varepsilon$, such that for any mean-zero $C^1$ solution $\omega$ to \eqref{euler},
		\[
		\|\omega(0,\cdot)-\bar\omega\|_{L^p(\mathbb T_\Lambda)}<\delta
		\quad\Longrightarrow\quad
		{\rm dist}_p(\omega(t,\cdot),\mathbf O_{\bar\omega}) <\varepsilon \quad \forall\,t\in\mathbb R,
		\]
		where $\mathbf O_{\bar\omega}$ denotes the translational orbit of $\bar\omega$,
		\begin{equation}\label{defoforbit}
\mathbf O_{\bar\omega}:=\left\{\bar\omega(\cdot-\mathbf p)\mid \mathbf p\in\mathbb T_{\Lambda}\right\},
\end{equation}
		and ${\rm dist}_p$ denotes the $L^p$ distance.
	\end{theoremA}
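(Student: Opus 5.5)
The plan is a variational (Arnold-type) argument built on conserved quantities of \eqref{euler}, followed by a compactness argument that transfers closeness of conserved quantities into closeness to the orbit. We work with $\omega\in L^p$ by density and use the $C^1$ well-posedness for the dynamics. Along any mean-zero $C^1$ solution, three quantities are conserved: the kinetic energy $E(\omega)=\frac12\int_{\mathbb T_\Lambda}\omega(-\Delta)^{-1}\omega\,\dd{\mathbf x}$, the Casimirs $\int_{\mathbb T_\Lambda} F(\omega)\,\dd{\mathbf x}$ (in particular the distribution function of $\omega$ and every $L^q$ norm), and the momenta/impulse. The first step is to characterize $\mathbf E_1$ variationally. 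By the Poincaré-type inequality $\int \omega(-\Delta)^{-1}\omega\le \lambda_1^{-1}\|\omega\|_{L^2}^2$, with equality exactly when $\omega\in\mathbf E_1$, the set $\mathbf E_1\cap\{\|\omega\|_{L^2}=\|\bar\omega\|_{L^2}\}$ consists of the maximizers of $E$ under the enstrophy constraint.

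This characterization is too coarse, because $\mathbf E_1$ is a sphere of dimension up to $5$ and is much larger than the orbit $\mathbf O_{\bar\omega}$, which is at most $2$-dimensional. We therefore refine the constraint. We maximize $E$ over the \emph{rearrangement class} $\mathcal R_{\bar\omega}$ of $\bar\omega$ (or over its weak-$L^p$ closure), since this class is invariant under the flow. The key claim is
\[
\mathcal M:=\mathbf E_1\cap \overline{\mathcal R_{\bar\omega}}^{w}=\mathbf E_1\cap\mathcal R_{\bar\omega}.
\]
The proof of this claim has two parts.
\begin{itemize}
\item[(i)] Any $\omega\in\overline{\mathcal R_{\bar\omega}}^{w}$ satisfies $\|\omega\|_2\le\|\bar\omega\|_2$, with equality iff $\omega\in\mathcal R_{\bar\omega}$. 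Hence the maximum of $E$ is $\lambda_1^{-1}\|\bar\omega\|_2^2/2$, and it is attained exactly on $\mathcal M$.
\item[(ii)] The set $\mathcal M$ is a finite union of translational orbits. For this we classify the elements of $\mathbf E_1$ that are equimeasurable with $\bar\omega$.
\end{itemize}

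Next comes the compactness step, by contradiction. Suppose $\omega_n(0)\to\bar\omega$ in $L^p$ but ${\rm dist}_p(\omega_n(t_n),\mathbf O_{\bar\omega})\ge\varepsilon$. Conservation gives $E(\omega_n(t_n))\to E(\bar\omega)$ and equimeasurability with $\omega_n(0)$. Extract a weak limit $\omega_*$. Because $(-\Delta)^{-1}$ is compact, $E$ is weakly continuous, so $\omega_*$ is a maximizer and $\omega_*\in\mathcal M$. Then $\|\omega_*\|_p=\lim\|\omega_n(t_n)\|_p$, and uniform convexity upgrades weak convergence to strong $L^p$ convergence. So $\omega_n(t_n)$ converges strongly to an element of $\mathcal M$.

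To finish we must exclude the orbits in $\mathcal M$ other than $\mathbf O_{\bar\omega}$. Since $\mathcal M$ is a finite union of orbits, the orbits are separated by a positive distance, so $\omega_n(t)$ must stay near one fixed orbit. Continuity of $t\mapsto\omega_n(t)$ in $L^p$, together with the fact that $\omega_n(0)$ starts near $\mathbf O_{\bar\omega}$, then traps the trajectory near $\mathbf O_{\bar\omega}$ for all $t$, which is a contradiction. If $\mathcal M$ turns out to be a continuum rather than finitely many orbits, connectedness alone will not suffice, and further conserved quantities are needed. The candidates are the other Casimirs (they are already built into the rearrangement class) and the impulse/momentum $\int \mathbf x^\perp\omega$, suitably interpreted on the torus. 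These should rigidify the amplitudes $(|a_1|,|a_2|,|a_3|)$ of the three modes $\mathbf k_j$.

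The main obstacle is step (ii) on the hexagonal torus. There $\bar\omega=\sum_j a_j\cos(\mathbf k_j\cdot\mathbf x-\theta_j)$, and translations act on the phases $\theta_1,\theta_2,\theta_3$ with the invariant $\theta_3-\theta_1-\theta_2$, because $\mathbf k_3=\mathbf k_1+\mathbf k_2$. I would compute low moments of the distribution, $\int\omega^2$, $\int\omega^3$ and $\int\omega^4$. The cubic moment is proportional to $a_1a_2a_3\cos(\theta_3-\theta_1-\theta_2)$, coming from the resonant triad. The quartic moment gives a further symmetric function of the $a_j^2$. My attempt is to show that $a_1^2+a_2^2+a_3^2$, the quartic combination, the sextic moment, and the cubic moment together determine the $a_j^2$ as the roots of a cubic polynomial, up to permutation, and also determine the invariant phase. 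This would reduce $\mathcal M$ to finitely many orbits, obtained from lattice symmetries permuting the $\mathbf k_j$. I expect the degenerate configurations, with some $a_j=0$ or with $\cos(\cdot)=0$, to require separate handling.
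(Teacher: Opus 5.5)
Your plan is correct and follows essentially the same route as \cite{WZ23,W252}, which the paper cites for Theorem A: a Burton-type compactness argument reduces orbital stability to the equimeasurable partition of $\mathbf E_1$. Your moment computation is exactly what the identities \eqref{C2346}--\eqref{f123b} carry out: they fix $\{a_j^2\}$ up to permutation and fix the invariant phase up to sign, so $\mathbf E_1\cap\mathcal R_{\bar\omega}$ is always a finite union of orbits (including the conjugate orbit $\widetilde{\mathbf O}$ of Case 1.2), your connectedness trapping finishes the proof, and no impulse-type invariant is needed (such an invariant is not well defined on a torus anyway).

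Two small repairs are needed. First, the displayed equality $\mathbf E_1\cap\overline{\mathcal R_{\bar\omega}}^{w}=\mathbf E_1\cap\mathcal R_{\bar\omega}$ is false as written, since the left side contains $0$; define $\mathcal M$ instead as the set of maximizers, which your step (i) correctly identifies with $\mathbf E_1\cap\mathcal R_{\bar\omega}$. Second, placing the weak limit $\omega_*$ in $\overline{\mathcal R_{\bar\omega}}^{w}$ (or at least getting $\|\omega_*\|_2\le\|\bar\omega\|_2$) requires Burton's rearrangement lemma or a truncation argument. This matters when $1<p<2$, because there $\|\omega_n(t_n)\|_2=\|\omega_n(0)\|_2$ is not controlled by the $L^p$ perturbation.
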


	However, the dependence of $\delta$ on $\varepsilon$ remains unknown, except in the special case of the square torus \cite{E24}. Our goal in this paper is to provide explicit estimates for this dependence on tori of arbitrary shape.

	\subsection{Main theorem}
	We present only the quantitative estimates for the most delicate hexagonal case as the main theorem below, and postpone the simpler non-hexagonal cases to the final section.
	
	\begin{theorem}\label{thm6d}
		Let $\mathbb T_\Lambda$ be a hexagonal torus, and let $\mathbf E_1$ be given by \eqref{e1df}.
 Fix $\bar{\omega} \in \mathbf E_1$ of the form
		\begin{equation*}
			\bar \omega(\mathbf x)=\sum_{i=1}^3A_i\cos(\mathbf k_i\cdot \mathbf x+\alpha_i),\quad A_i\geq 0,\,\, \alpha_i\in\mathbb R,\,\, (A_1,A_2,A_3)\neq (0,0,0).
		\end{equation*}
	Then there exist  $\varepsilon_0>0$ and $C>0$, depending only on $\Lambda$ and $\bar\omega$, such that for any $\varepsilon\in(0,\varepsilon_0)$ and any mean-zero $C^1$ solution $\omega(t,\mathbf{x})$ to \eqref{euler}, it holds that
		\[
		{\rm dist}_{2}(\omega(0,\cdot), \mathbf O_{\bar\omega}) < \varepsilon
		\quad\Longrightarrow\quad
		{\rm dist}_{2}(\omega(t,\cdot), \mathbf O_{\bar\omega}) < C \varepsilon^\gamma
		\quad\text{for all } t \in \mathbb{R},
		\]
		where $\mathbf O_{\bar\omega}$ is given by \eqref{defoforbit}, and $\gamma$ is determined by $A_i$ and $\alpha:=\alpha_1+\alpha_2-\alpha_3$ as follows:

		\begin{itemize}
			\item[(1)]  $A_1A_2A_3=0$:
			\begin{itemize}
				\item[(1-1)] If exactly one of $A_1,A_2,A_3$ is zero, then $\gamma=\frac12$.
				\item[(1-2)] If exactly two of $A_1,A_2,A_3$ are zero, then $\gamma=\frac14$.
			\end{itemize}
			
			\item[(2)] $A_1A_2A_3\neq 0$:
			\begin{itemize}
				\item[(2-1)] If $A_1,A_2,A_3$ are pairwise distinct and $\sin\alpha\neq 0$, then $\gamma=1$.
				\item[(2-2)] If $A_1,A_2,A_3$ are pairwise distinct and $\sin\alpha=0$, then $\gamma=\frac12$.
				\item[(2-3)] If exactly two of $A_1,A_2,A_3$ are equal and $\sin\alpha\neq 0$, then $\gamma=\frac12$.
				\item[(2-4)] If exactly two of $A_1,A_2,A_3$ are equal and $\sin\alpha=0$, then $\gamma=\frac14$.
				\item[(2-5)] If $A_1=A_2=A_3$ and $\sin\alpha\neq 0$, then $\gamma=\frac13$.
				\item[(2-6)] If $A_1=A_2=A_3$ and $\sin\alpha=0$, then $\gamma=\frac16$.
			\end{itemize}
		\end{itemize}
		
	\end{theorem}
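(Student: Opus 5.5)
The plan is to split the solution into its first-eigenspace part and a remainder, to control the remainder with a conserved spectral-gap functional, and then to reduce the finite-dimensional part to the translation invariants $(A_1,A_2,A_3,\alpha)$. I will control the amplitudes through a cubic polynomial and the phase through a separate trigonometric equation.

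\emph{Step 1: splitting and the remainder.} Write $\omega(t)=v(t)+w(t)$, where $v(t)$ is the $L^2$-projection of $\omega(t)$ onto $\mathbf E_1$ and $w(t)\perp\mathbf E_1$. Let $\lambda_1<\lambda_2$ be the first two eigenvalues of \eqref{lep}. The functional
\[
\int_{\mathbb T_\Lambda}\omega^2\,\dd{\mathbf x}-\lambda_1\int_{\mathbb T_\Lambda}\omega\psi\,\dd{\mathbf x}
\]
is conserved, vanishes on $\mathbf E_1$, and is bounded below by $(1-\lambda_1/\lambda_2)\|w\|_2^2$. It is also translation invariant, so we may first replace $\bar\omega$ by the element of $\mathbf O_{\bar\omega}$ closest to $\omega(0)$. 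Evaluating the functional at $t=0$ then gives $\|w(t)\|_2\le C\varepsilon$ for all $t$. Energy and enstrophy conservation likewise give $\big|\|v(t)\|_2^2-\|\bar\omega\|_2^2\big|\le C\varepsilon$.

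\emph{Step 2: Casimirs on $\mathbf E_1$.} Since $\|v(t)\|_{L^\infty}\le C$ (finite dimension) and the spectral part is controlled, I will use Casimirs $\int f(\omega)$ with $f$ Lipschitz, bounded, and equal to $s^n$ on a neighbourhood of the range of $v$. The difficulty is that $\omega$ itself may be large where $w$ is large. The bound $|\int f(\omega)-\int f(v)|\le \mathrm{Lip}(f)\|w\|_1\le C\varepsilon$ handles this, at the price of $\int f(\omega(0))$ versus $\int v^n$ agreeing only up to $C\varepsilon$; that suffices. Write
\[
v(t)=\sum_{i=1}^3 B_i\cos(\mathbf k_i\cdot\mathbf x+\beta_i),\qquad \beta:=\beta_1+\beta_2-\beta_3 .
\]
I then compute $\int v^n$ for $n=2,3,4,6$ using the resonances of $\{\pm\mathbf k_i\}$, the only ones being of the form $\mathbf k_1+\mathbf k_2-\mathbf k_3=0$ and trivial pairings. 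These are polynomials in $B_i^2$ and $B_1B_2B_3\cos\beta$. From $n=2$ one obtains $\sigma_1=\sum B_i^2$. From $n=3$ one obtains $B_1B_2B_3\cos\beta$. From $n=4$, after subtracting known quantities, one obtains $\sigma_2=\sum_{i<j}B_i^2B_j^2$. From $n=6$ (or another suitable Casimir), after subtracting $\cos^2\beta$-terms already determined, one obtains $\sigma_3=B_1^2B_2^2B_3^2$. Each recovered $\sigma_j$ then differs from its value at $\bar\omega$ by $O(\varepsilon)$.

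\emph{Step 3: root stability, the phase, and the main obstacle.} The squared amplitudes $B_i^2$ and $A_i^2$ are the roots of $s^3-\sigma_1s^2+\sigma_2s-\sigma_3$ and of the corresponding polynomial for $\bar\omega$, whose coefficients differ by $O(\varepsilon)$. The elementary root-perturbation lemma says a root of multiplicity $m$ moves by $O(\varepsilon^{1/m})$. Hence, after relabelling, $|B_i^2-A_i^2|\le C\varepsilon^{1/m}$ with $m=1,2,3$ according to whether the $A_i$ are distinct, two coincide, or all coincide. For a zero root the amplitude error is the square root: in case (1-2) the double root $0$ gives $|B_i|\le C\varepsilon^{1/4}$, and in case (1-1) the simple root $0$ gives $\varepsilon^{1/2}$. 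For the phase, when $A_1A_2A_3\ne0$, dividing the $n=3$ relation by $B_1B_2B_3$ gives $|\cos\beta-\cos\alpha|\le C\varepsilon^{1/m}$. If $\sin\alpha\neq0$ this gives $|\beta-\alpha|\le C\varepsilon^{1/m}$, up to the sign ambiguity $\beta\approx\pm\alpha$, which must be handled by a reflection or relabelling argument. If $\sin\alpha=0$ we only get $\varepsilon^{1/(2m)}$. These exponents reproduce the table of the theorem. The main obstacle I expect is twofold. First, the Casimir computation must actually isolate $\sigma_3$ independently of the phase term. Second, the relabelling of roots must be compatible with the labels $\mathbf k_i$, since a permutation changes $\alpha$ to $\pm\alpha$. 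I would handle the second point by showing directly that $\bar\omega$ composed with the lattice symmetry realises the permuted configuration, or else by restricting to the continuous branch near $t=0$.

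\emph{Step 4: conclusion.} Given $B_i\approx A_i$ and $\beta\approx\alpha$, choose a translation $\mathbf p$ with $\mathbf k_1\cdot\mathbf p=\beta_1-\alpha_1$ and $\mathbf k_2\cdot\mathbf p=\beta_2-\alpha_2$, which is possible since $\mathbf k_1,\mathbf k_2$ are independent. The third phase then matches up to $|\beta-\alpha|$, so
\[
\|v(t)-\bar\omega(\cdot-\mathbf p)\|_2\le C\Big(\sum_i|B_i-A_i|+|\beta-\alpha|\Big)\le C\varepsilon^\gamma .
\]
Adding $\|w\|_2\le C\varepsilon$ gives the theorem for $\varepsilon<\varepsilon_0$.
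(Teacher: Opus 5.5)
Your proposal is correct and follows essentially the same route as the paper: the energy--Casimir bound on the $\mathbf E_1^\perp$ component, Lipschitz-truncated Casimirs $\mathsf C_2,\mathsf C_3,\mathsf C_4,\mathsf C_6$ giving the elementary symmetric polynomials of $(B_1^2,B_2^2,B_3^2)$ (with $\mathsf C_3^2$ cancelling the $\cos 2\beta$ term in $\mathsf C_6$), root stability for the cubic, and the phase read off from $\mathsf C_3$ with a square-root loss when $\sin\alpha=0$. The one point to tighten is branch selection. Reflections and lattice-symmetry relabellings do not preserve the translational orbit $\mathbf O_{\bar\omega}$: the branch $\beta\approx-\alpha$ corresponds to the conjugate orbit $\widetilde{\mathbf O}$, which lies at positive $L^2$ distance from $\mathbf O$ when $\sin\alpha\neq 0$. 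So both the root labelling and the sign of $\beta$ must be fixed by your other option, continuity of $t\mapsto v_t$ starting from $t=0$, and this is exactly what the paper does.
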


	We give some remarks on Theorem \ref{thm6d} below.
	
\begin{remark}
Theorem \ref{thm6d} holds for more general perturbations, not only for $C^1$ solutions. Indeed, our proof only requires that $\omega \in C(\mathbb R; \mathring L^2(\mathbb T_\Lambda))$ (see \eqref{defoflo} for the definition of $\mathring L^2(\mathbb T_\Lambda)$) and that both the kinetic energy $\mathsf E$ and various Casimir functionals $\mathsf C_F$ (see \eqref{defofe} and \eqref{defofc} for the definitions) are conserved.
\end{remark}

\begin{remark}
It remains unclear whether the stability exponent $\gamma$ is optimal, except in case (2-1). This is an interesting but difficult problem for future investigation. Similar issues also arise in \cite{DM25,E24,WW25}.
\end{remark}

	\begin{remark}
		
		In Remark \ref{symmetry-degeneracy}, we will discuss in detail the dependence of the constant $C$ on the amplitude and phase configuration $(A_1, A_2, A_3,\alpha)$, from which one can see how the exponent $\gamma$ exhibits a change in order as the amplitude and phase configuration varies.
	\end{remark}

\begin{remark}
The table below shows more clearly how the value of $\gamma$ depends on $(A_1,A_2,A_3,\alpha)$. It follows from the table that stability becomes weaker as the amplitudes become more degenerate, either through vanishing components or through coincidences among $A_1,A_2,A_3$. When all amplitudes are nonzero, the additional phase degeneracy $\sin\alpha=0$ further reduces the stability exponent $\gamma$ by a factor of two. However, this interpretation is only heuristic, since the sharpness of the exponent $\gamma$ remains unclear. A similar weakening of stability for certain special flow patterns  also appears in \cite{DM25,E24,WW25}.

	\medskip
	
	\begin{center}
		\scriptsize
		\renewcommand{\arraystretch}{1.18}
		\setlength{\tabcolsep}{5pt}
		\captionsetup{skip=2pt}
		\begin{minipage}[t]{0.42\textwidth}
			\centering
			\captionof*{table}{$A_1A_2A_3=0$}
			
			\begin{tabular}{lc}
				\toprule
				\textbf{Amplitude configuration}
				& \textbf{Any $\alpha$} \\
				\midrule
				exactly one of $A_1,A_2,A_3$ is zero
				& $\frac12$ \\
				exactly two of $A_1,A_2,A_3$ are zero
				& $\frac14$ \\
				\bottomrule
			\end{tabular}
		\end{minipage}
		\hfill
		\begin{minipage}[t]{0.54\textwidth}
			\centering
			\captionof*{table}{$A_1A_2A_3\neq0$}
			
			\begin{tabular}{lcc}
				\toprule
				\textbf{Amplitude configuration}
				& \textbf{$\sin\alpha\neq0$}
				& \textbf{$\sin\alpha=0$} \\
				\midrule
				$A_1,A_2,A_3$ are pairwise distinct
				& $1$
				& $\frac12$ \\
				exactly two of $A_1,A_2,A_3$ are equal
				& $\frac12$
				& $\frac14$ \\
				$A_1=A_2=A_3$
				& $\frac13$
				& $\frac16$ \\
				\bottomrule
			\end{tabular}
		\end{minipage}
		
	\end{center}

\end{remark}
	\medskip



	\subsection{Comments, proof strategy, and novelties}

	The Euler equation possesses a rich family of conservation laws, which provides a natural framework for stability analysis. In the case of a flat 2-torus, the following quantities are conserved:
	\begin{itemize}
		\item [(1)] The kinetic energy $\mathsf E$, which in terms of vorticity can be expressed as
		\begin{equation}\label{defofe}\mathsf E(\omega)=\frac{1}{2}\int_{\mathbb T_\Lambda}\omega\,(-\Delta)^{-1}\omega\dd{\mathbf x}\footnote{Although $(-\Delta)^{-1}\omega$ may differ by an arbitrary constant, this integral is well-defined since $\omega$ has mean zero.};
\end{equation}
		\item  [(2)] The Casimir functionals of the form
		\begin{equation}\label{defofc}
\mathsf C_F(\omega)=\int_{\mathbb T_\Lambda}F(\omega)\dd{\mathbf x},
\end{equation}
		where $F\in C(\mathbb R)$.  In particular, the $k$-th order Casimir
		\begin{equation}\label{kcm}
			\mathsf C_k(\omega):=\int_{\mathbb T_\Lambda} \omega^k\dd{\mathbf x}
		\end{equation}
		is conserved, where $k$ is a positive integer.
	\end{itemize}

	In the 1960s, inspired by the Lyapunov function method in ODE theory, Arnold \cite{A65,A69} introduced the energy-Casimir (EC) functional and established two types of stability theorems, now known as Arnold's first and second stability theorems.
	For Laplacian eigenstates, the associated Casimir is the enstrophy, i.e., the $L^2$ norm of the vorticity. Using the EC functional method, it can be proved that for any $\bar\omega \in \mathbf E_1$, the first energy shell
	\[
	\mathbf S_{\bar\omega} := \left\{ v \in \mathbf E_1 \;\middle|\; \|v\|_{L^2(\mathbb T_{\Lambda})} = \|\bar\omega\|_{L^2(\mathbb T_{\Lambda})} \right\}
	\]
	is stable with respect to the enstrophy norm.
	However, this result cannot be improved using only the kinetic energy and the enstrophy, since these quantities do not distinguish between different states in $\mathbf S_{\bar\omega}$.
	The first refinement was obtained by Wirosoetisno and Shepherd \cite{WS99} for the square torus. They introduced higher-order Casimirs $\mathsf C_k,$ $k=2,4,6$, to distinguish elements of $\mathbf{S}_{\bar\omega}$, and studied the stability of the translational orbit $\mathbf O_{\bar\omega}$.
	Nevertheless, their formulation of stability relies on higher-order Casimirs, and  complete orbital stability  remains unresolved.
	A complete proof of the stability of $\mathbf O_{\bar\omega}$ for both rectangular and square tori was given in \cite{WZ23}, where they used a compactness argument originating from Burton's work \cite{BMA,B05}, reducing the problem to the equimeasurable partition of the first eigenspace. The stability result in \cite{WZ23} is qualitative, and is measured by the   $L^p$ norm of the vorticity for any $p\in(1,\infty)$.
	Later, Elgindi \cite{E24}  established a quantitative estimate on the stability of $\mathbf O_{\bar\omega}$ for the square torus in the $L^2$ setting, building on and refining the approach of Wirosoetisno and Shepherd. Similar ideas have since been applied to the orbital stability of Laplacian eigenstates on the sphere \cite{DM25} or in a disk \cite{WW25}. Very recently, the result of \cite{WZ23} was extended to tori of arbitrary shape in \cite{W252}, with the main contribution being the treatment of the hexagonal case.

The proof of Theorem \ref{thm6d} follows the framework introduced in \cite{E24,WS99} (see also \cite{DM25,WW25} for further developments) and is divided into two main steps:

	\begin{itemize}
		\item[(1)] We first establish the quantitative stability  of the first eigenspace $\mathbf E_1$. This step mainly relies on the conservation of the kinetic energy and the enstrophy, in combination with proper use of a Poincar\'e-type inequality.
		\item [(2)]	We then estimate the distance between the projection of the perturbed solution onto $\mathbf E_1$ and the translational orbit $\mathbf O_{\bar\omega}$. More specifically, we introduce modified integer-order Casimirs that are globally Lipschitz continuous in $L^2$ and use them to control the variation of the amplitude and phase parameters. The problem is then reduced to the analysis of a possibly degenerate nonlinear map in finite dimensions.
\end{itemize}

The first step is standard and follows from an argument similar to that in \cite{WW25}.
The main challenges and differences lie in the second step. First, the structure of the first eigenspace and the characterization of translational orbits within the first eigenspace are more complicated, which makes the analysis of energy and vorticity transfer within it more difficult. Second, the nonlinear map arising in the second step may be highly degenerate at certain special configurations, leading to weaker stability. For example, the stability exponent $\gamma$ may be as small as $1/6$ when $A_1=A_2=A_3$ and $\sin\alpha=0$, whereas the smallest value of stability exponent appearing in \cite{DM25,E24,WW25} was $1/2$. This strong degeneracy prevents us from directly applying the previous methods, such as the degenerate inverse function lemma established in \cite{DM25} and the direct computations carried out in \cite{WW25}. To overcome these difficulties, we make a key observation: \emph{the complicated nonlinear map in the second step can be suitably combined into a polynomial map given by the three elementary symmetric polynomials in three variables.} This reduces the estimates for the amplitude parameters to a \emph{root-stability problem} for a cubic polynomial under perturbations of its coefficients, which can be handled in a straightforward manner; see Lemma \ref{abn}. Once the estimates for the amplitude parameters have been established, the estimates for the phase parameters can be obtained through careful and intricate computations.

\subsection{Organization of the paper}
	
	The remainder of the paper is organized as follows. In Section \ref{sec2}, we establish a quantitative stability estimate for the first eigenspace $\mathbf E_1$.
	In Section \ref{sec3}, we prove a technical lemma on the quantitative stability of the roots of a polynomial under perturbations of its coefficients. This lemma plays a crucial role in the proof of the main theorem.
	Section \ref{sec4} is devoted to the proof of Theorem \ref{thm6d}.
Finally, in Section \ref{sec5}, we treat the remaining non-hexagonal cases and provide the corresponding quantitative stability results with brief proofs.

	\section{Quantitative stability of $\mathbf E_1$}\label{sec2}

	Denote by $\mathring L^2(\mathbb T_\Lambda)$ the closed subspace of mean-zero functions in  $L^2(\mathbb T_\Lambda)$, i.e.,
	\begin{equation}\label{defoflo}
\mathring L^2(\mathbb T_\Lambda):=\left\{v\in L^2(\mathbb T_\Lambda) \;\middle|\;  \int_{\mathbb T_\Lambda} v \dd{\mathbf x}=0\right\},
\end{equation}
	and denote by $\mathbf E_1^\perp$  the orthogonal complement of $\mathbf E_1$ in $\mathring L^2(\mathbb T_\Lambda)$.
	
	\begin{proposition}\label{prop21}
		Let $\omega_t := \omega(t,\cdot)$ be a mean-zero $C^1$ solution of the Euler equation \eqref{euler}. 	Consider the orthogonal decomposition
		\[\omega_t=v_t+w_t,\quad v_t \in \mathbf E_1,\quad w_t \in \mathbf E_1^{\perp}.\]
		Then, for all $t\in\mathbb R$, we have
		\[
		\|w_t\|_{L^2(\mathbb T_\Lambda)}
		\leq
		\sqrt{\frac{\lambda_2}{\lambda_2-\lambda_1}}\,
		\|w_0\|_{L^2(\mathbb T_\Lambda)},
		\]
		where $\lambda_i$ is the $i$-th eigenvalue (counted without multiplicity)  of \eqref{lep}.
	\end{proposition}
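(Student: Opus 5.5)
The plan is to combine the two conserved quantities that do not see the splitting, the enstrophy $\|\omega_t\|_{L^2}^2$ (the Casimir $\mathsf C_2$) and the kinetic energy $\mathsf E(\omega_t)$, and to use the spectral gap between $\lambda_1$ and $\lambda_2$ on $\mathbf E_1^\perp$.

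First, write $\mathsf E$ and the enstrophy in terms of the decomposition $\omega_t=v_t+w_t$. The operator $(-\Delta)^{-1}$ preserves $\mathbf E_1$ and $\mathbf E_1^\perp$. It acts on $\mathbf E_1$ as multiplication by $1/\lambda_1$, and these subspaces are orthogonal. Hence the cross terms vanish and
\[
2\mathsf E(\omega_t)=\frac{1}{\lambda_1}\|v_t\|_{L^2}^2+\int_{\mathbb T_\Lambda} w_t(-\Delta)^{-1}w_t\dd{\mathbf x},\qquad \|\omega_t\|_{L^2}^2=\|v_t\|_{L^2}^2+\|w_t\|_{L^2}^2 .
\]
Expanding $w_t$ in eigenfunctions with eigenvalues $\ge\lambda_2$ gives the Poincar\'e-type inequality
\[
0\le \int w_t(-\Delta)^{-1}w_t\le \frac{1}{\lambda_2}\|w_t\|_{L^2}^2 .
\]

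Next, form the conserved combination
\[
Q(\omega):=\|\omega\|_{L^2}^2-2\lambda_1\mathsf E(\omega)=\|w\|_{L^2}^2-\lambda_1\int w(-\Delta)^{-1}w .
\]
In this combination the $\mathbf E_1$ part cancels exactly. By the previous inequality,
\[
\Big(1-\frac{\lambda_1}{\lambda_2}\Big)\|w\|_{L^2}^2\le Q(\omega)\le \|w\|_{L^2}^2 .
\]
Since $Q(\omega_t)=Q(\omega_0)$, we get
\[
\frac{\lambda_2-\lambda_1}{\lambda_2}\|w_t\|_{L^2}^2\le Q(\omega_0)\le \|w_0\|_{L^2}^2 ,
\]
and taking square roots gives the stated bound.

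The only point needing care is justifying conservation of $\mathsf E$ and $\mathsf C_2$ for mean-zero $C^1$ solutions, which is standard. I would also check that the eigenfunction expansion, i.e.\ the spectral decomposition of $-\Delta$ on $\mathring L^2(\mathbb T_\Lambda)$ via the dual lattice, legitimately yields the lower bound $\lambda_2$ on $\mathbf E_1^\perp$. There is no real obstacle beyond this.
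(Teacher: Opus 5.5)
Your proof is correct and is essentially the paper's argument. Your $Q(\omega)=\|\omega\|_{L^2}^2-2\lambda_1\mathsf E(\omega)$ is exactly $-\lambda_1$ times the paper's energy-Casimir functional $EC$, and you use the same two-sided bound from the Poincar\'e-type inequality on $\mathbf E_1^\perp$ (Lemma \ref{lema21}, which you justify via the eigenfunction expansion) together with conservation; the only cosmetic difference is that you state the nonnegativity $\int w(-\Delta)^{-1}w\,\dd{\mathbf x}\ge 0$ explicitly, whereas the paper uses it implicitly for its lower bound.
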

	\begin{remark}
		It is clear that
		\[
		{\rm dist}_2\bigl(\omega_t,\mathbf E_1\bigr) =\|w_t\|_{L^2(\mathbb T_\Lambda)}.
		\]
		Therefore, Proposition \ref{prop21} establishes a quantitative stability estimate for  $\mathbf E_1$.
		
	\end{remark}	
	
	To prove Proposition \ref{prop21}, we need the following Poincar\'e-type inequality associated with $\mathbf E_1$.
	
	\begin{lemma}\label{lema21}
		For any $v\in \mathbf E_1^\perp$, it holds that
		\begin{equation}\label{l2}
			\displaystyle \int_{\mathbb T_\Lambda} v\,(-\Delta)^{-1}v \dd{\mathbf{x}} \leq \frac{1}{\lambda_2}\displaystyle \int_{\mathbb T_\Lambda} v^2 \dd{\mathbf{x}}.
		\end{equation}

	\end{lemma}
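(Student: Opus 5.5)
The plan is to expand $v$ in the $L^2$-orthonormal basis of eigenfunctions of $-\Delta$ on $\mathbb T_\Lambda$ restricted to mean-zero functions, namely the real Fourier modes $\cos(\mathbf k\cdot\mathbf x)$, $\sin(\mathbf k\cdot\mathbf x)$ with $\mathbf k$ ranging over the nonzero vectors of the dual lattice $\Lambda^*$ (scaled by $2\pi$). Since $\mathring L^2(\mathbb T_\Lambda)$ is spanned by these modes and each is an eigenfunction with eigenvalue $|\mathbf k|^2$, we can write $\mathring L^2(\mathbb T_\Lambda)=\bigoplus_{j\ge1}\mathbf E_j$, an orthogonal direct sum over the distinct eigenvalues $\lambda_1<\lambda_2<\cdots$, where $\mathbf E_j$ is the corresponding finite-dimensional eigenspace.

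For $v\in\mathbf E_1^\perp$ the component of $v$ in $\mathbf E_1$ vanishes, so $v=\sum_{j\ge2}v_j$ with $v_j\in\mathbf E_j$, converging in $L^2$. We have $(-\Delta)^{-1}v_j=\lambda_j^{-1}v_j$. Here $(-\Delta)^{-1}$ is the mean-zero inverse, which is bounded on $\mathring L^2$ and diagonal in this decomposition. Using orthogonality of the $\mathbf E_j$,
\[
\int_{\mathbb T_\Lambda} v\,(-\Delta)^{-1}v\dd{\mathbf x}=\sum_{j\ge2}\frac{1}{\lambda_j}\|v_j\|_{L^2(\mathbb T_\Lambda)}^2\le\frac1{\lambda_2}\sum_{j\ge2}\|v_j\|_{L^2(\mathbb T_\Lambda)}^2=\frac1{\lambda_2}\int_{\mathbb T_\Lambda}v^2\dd{\mathbf x}.
\]
The inequality uses $\lambda_j\ge\lambda_2$ for $j\ge2$, and the last equality is Parseval.

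If $(-\Delta)^{-1}v$ is only defined up to a constant, the integral is unchanged because $v$ has mean zero, as noted in the footnote to \eqref{defofe}. So no ambiguity arises.

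The only point requiring care is justifying the exchange of the infinite sum with the integral. This follows from $L^2$ convergence of the expansion together with boundedness of $(-\Delta)^{-1}$ on $\mathring L^2$. I do not expect any real obstacle; the argument is a spectral-theoretic Rayleigh-quotient estimate.
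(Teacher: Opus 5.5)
Your spectral argument is correct and is essentially the standard proof the paper has in mind; the paper gives no proof itself and only refers to Lemma 2.1 of [WW25]. As you do, decompose $v\in\mathbf E_1^\perp$ over the eigenspaces $\mathbf E_j$, $j\ge 2$, and use $\lambda_j\ge\lambda_2$; the only cosmetic fix is that the real Fourier modes form an orthonormal basis once you take one representative from each pair $\pm\mathbf k$ and normalize, which does not affect the argument.
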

	
	The proof of the above lemma is standard and follows by repeating the argument in \cite[Lemma 2.1]{WW25}.

	\begin{proof}[Proof of Proposition \ref{prop21}]
		
		Define the energy-Casimir functional $EC: \mathring L^2(\mathbb T_{\Lambda})\to \mathbb R$ by
		\[
		EC(v):=\int_{\mathbb T_\Lambda}v\,(-\Delta)^{-1}v \dd{\mathbf x}-\frac{1}{\lambda_1}\int_{\mathbb T_\Lambda} v^2\dd {\mathbf x},
		\]
		which is  conserved for any mean-zero $C^1$ solution of \eqref{euler}. A straightforward computation yields
		\begin{equation}\label{461}
			EC(\omega_t)  =\displaystyle \int_{\mathbb T_\Lambda} w_t\,(-\Delta)^{-1}w_t   \dd{\mathbf{x}}-\frac{1}{\lambda_1} \displaystyle \int_{\mathbb T_\Lambda} w_t^2 \dd{\mathbf x}.
		\end{equation}
		On the other hand, by Lemma \ref{lema21}, we have
		\begin{equation}\label{462}
			\int_{\mathbb T_\Lambda} w_t\,(-\Delta)^{-1}w_t \, \dd{\mathbf{x}} \leq \frac{1}{\lambda_2} \displaystyle \int_{\mathbb T_\Lambda} w_t^2 \, \dd{\mathbf{x}}.
		\end{equation}
		From \eqref{461} and \eqref{462}, we have
		\[
		-\frac{1}{\lambda_1} \displaystyle \int_{\mathbb T_\Lambda} w_t^2 \dd{\mathbf x}\leq EC(\omega_t) \leq \left(\frac{1}{\lambda_2}-\frac{1}{\lambda_1}\right)\int_{\mathbb T_\Lambda} {w_t}^2 \dd{\mathbf x}.
		\]
		Since $EC$ is conserved,  it follows that
		\[
		-\frac{1}{\lambda_1}\displaystyle \int_{\mathbb T_\Lambda} {w_0}^2 \dd{\mathbf x}\leq EC(\omega_0)=EC(\omega_t) \leq \left(\frac{1}{\lambda_2}-\frac{1}{\lambda_1}\right)\displaystyle  \int_{\mathbb T_\Lambda} {w_t}^2 \dd{\mathbf x},
		\]
		which implies that
		\[
		\displaystyle \int_{\mathbb T_\Lambda} {w_t}^2 \dd{\mathbf x} \leq \frac{\lambda_2}{\lambda_2-\lambda_1} \displaystyle \int_{\mathbb T_\Lambda} {w_0}^2 \dd{\mathbf x}.
		\]
		The proof is complete.
	\end{proof}

	\section{A technical lemma}\label{sec3}
	
	Let $n \ge 2$ be an integer. Denote by $f_1, \dots, f_n$ the  $n$ elementary symmetric polynomials in $n$ variables, i.e.,
	\[
	f_1(\mathbf a)=\sum_{i=1}^n a_i,\quad
	f_2(\mathbf a)=\sum_{1\le i<j\le n} a_i a_j,\quad
	\dots,\quad
	f_n(\mathbf a)=\prod_{i=1}^n a_i,
	\]
	where  $\mathbf a = (a_1,a_2, \dots, a_n) \in \mathbb{R}^n.$
	
	The following lemma gives quantitative bounds on \(|b_k-a_k|\) in terms of \(\sum_{i=1}^n |f_i(\mathbf a) - f_i(\mathbf b)|\), and plays a key role in proving the main theorem.

	\begin{lemma}\label{abn}
		Let $\mathbf{a} = (a_1, a_2, \dots, a_n) \in \mathbb{R}^n$, and fix $k \in \{1,2,\dots,n\}$. Suppose that $a_k$ appears $m$ times in the multiset $\{a_1, a_2, \dots, a_n\}$, i.e.,
		\[\# \left\{ i \in \{1,\dots,n\}  \;\middle|\; a_i = a_k \right\}=m.\]
		Then there exist $\delta=\delta(\mathbf a)>0$,  and $C=C(|\mathbf a|)>0$,  such that for any $\mathbf b\in\mathbb R^n$ satisfying $|b_k- a_k|<\delta,$
		\[
		\left|b_k-a_k\right|^m
		\le
		\frac{C}{\rho}\sum_{i=1}^n \left|f_i(\mathbf a)-f_i(\mathbf b)\right|,
		\qquad
		\rho:=\begin{cases}
			\prod_{\substack{1\le i\le n\\ a_i\ne a_k}} \left|a_i-a_k\right|,&\mbox{if }1\leq m<n,\\
			1,&\mbox{if } m=n.
		\end{cases}
		\]
	\end{lemma}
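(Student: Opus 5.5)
The key tool is the monic polynomial whose roots are the entries of \(\mathbf a\),
\[
P_{\mathbf a}(x):=\prod_{i=1}^n (x-a_i)=x^n-f_1(\mathbf a)x^{n-1}+f_2(\mathbf a)x^{n-2}-\cdots+(-1)^n f_n(\mathbf a),
\]
and similarly \(P_{\mathbf b}\). The plan is to evaluate \(P_{\mathbf a}\) at the point \(b_k\). Since \(b_k\) is a root of \(P_{\mathbf b}\), we have \(P_{\mathbf b}(b_k)=0\), and therefore
\[
|P_{\mathbf a}(b_k)|=|P_{\mathbf a}(b_k)-P_{\mathbf b}(b_k)|=\Bigl|\sum_{i=1}^n (-1)^i\bigl(f_i(\mathbf a)-f_i(\mathbf b)\bigr)b_k^{\,n-i}\Bigr|\le \max_{0\le j\le n-1}|b_k|^{j}\sum_{i=1}^n|f_i(\mathbf a)-f_i(\mathbf b)|.
\]
We will take \(\delta\le 1\). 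Then \(|b_k|\le |\mathbf a|+1\), so the prefactor is at most \((1+|\mathbf a|)^{n-1}\). This gives the upper bound for \(|P_{\mathbf a}(b_k)|\) with a constant depending only on \(|\mathbf a|\) (and \(n\)).

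For the lower bound, factor \(P_{\mathbf a}(b_k)\) according to whether \(a_i=a_k\):
\[
|P_{\mathbf a}(b_k)|=|b_k-a_k|^m\prod_{a_i\ne a_k}|b_k-a_i|.
\]
If \(m=n\), the second product is empty and the claim follows with \(\rho=1\). If \(m<n\), choose
\[
\delta:=\min\Bigl\{1,\ \tfrac12\min_{a_i\ne a_k}|a_i-a_k|\Bigr\},
\]
which depends on \(\mathbf a\), as allowed. Then for each \(i\) with \(a_i\neq a_k\),
\[
|b_k-a_i|\ge |a_i-a_k|-|b_k-a_k|\ge \tfrac12|a_i-a_k|,
\]
so
\[
\prod_{a_i\ne a_k}|b_k-a_i|\ge 2^{-(n-m)}\rho.
\]
Combining this with the upper bound yields
\[
|b_k-a_k|^m\le \frac{2^{n}(1+|\mathbf a|)^{n-1}}{\rho}\sum_{i=1}^n|f_i(\mathbf a)-f_i(\mathbf b)|,
\]
so \(C=2^n(1+|\mathbf a|)^{n-1}\) depends only on \(|\mathbf a|\) and \(n\).

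I expect no serious obstacle here. The only point needing care is bookkeeping: the constant \(C\) must depend only on \(|\mathbf a|\), with all dependence on the gaps absorbed into \(\rho\) and \(\delta\). This is exactly why the lower bound uses the factor \(\tfrac12|a_i-a_k|\) rather than a constant depending on the gaps. Note also that only the hypothesis \(|b_k-a_k|<\delta\) is used; no assumption on the other \(b_i\) is needed, since \(b_k\) being a root of \(P_{\mathbf b}\) is all that matters.
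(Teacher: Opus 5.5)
Your proposal is correct and follows essentially the same route as the paper: evaluate $P_{\mathbf a}$ at $b_k$, use $P_{\mathbf b}(b_k)=0$ to bound $|P_{\mathbf a}(b_k)|$ by a power of $(1+|\mathbf a|)$ times $\sum_i|f_i(\mathbf a)-f_i(\mathbf b)|$, and divide by the cofactor $\prod_{a_i\ne a_k}|b_k-a_i|$. The only difference is in the lower bound on that cofactor. You choose $\delta$ explicitly and bound the cofactor termwise by $2^{-(n-m)}\rho$. The paper instead obtains $|Q_{\mathbf a}(b_k)|\ge \rho/2$ from continuity of $Q_{\mathbf a}$ at $a_k$. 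Both give a constant depending only on $n$ and $|\mathbf a|$.
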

	\begin{proof}
		
		Define
		\[
		P_{\mathbf a}(s)
		:=
		s^n - f_1(\mathbf a)s^{n-1} + f_2(\mathbf a)s^{n-2} - f_3(\mathbf a)s^{n-3}
		+\cdots+(-1)^{n-1}f_{n-1}(\mathbf a)s + (-1)^n f_n(\mathbf a).
		\]
		Then, by Vieta's formula,
		\[
		P_{\mathbf a}(s)=\prod_{i=1}^n (s-a_i).
		\]
		Since $a_k$ appears $m$ times in the multiset $\{a_1, a_2, \dots, a_n\}$, $a_k$ is a root of multiplicity $m$ of $P_{\mathbf a}$. So we can write
		\begin{equation}\label{decom1}
			P_{\mathbf a}(s)=(s-a_k)^m Q_{\mathbf a}(s),
		\end{equation}
		where
		\[Q_{\mathbf a}(s):=
		\begin{cases}
			\prod_{\substack{1\le i\le n\\ a_i\ne a_k}}(s-a_i),&\mbox{if }1\leq m<n,\\
			1,&\mbox{if } m=n.
		\end{cases}
		\]
		It is clear that $|Q_{\mathbf a}(a_k)| =\rho$ according to the definition of $\rho$. Moreover, since $Q_{\mathbf a}$ is continuous, there exists some $\delta>0,$ depending only on $\mathbf a$, such that
		\begin{equation}\label{qb01}
			\left|Q_{\mathbf a}(b_k)\right|\geq \frac{\rho}{2}
		\end{equation}
		as long as $|a_k-b_k|<\delta.$ Without loss of generality, we assume that $\delta<1.$
		Define $P_{\mathbf b}$ analogously to $P_{\mathbf a}$,
		\[P_{\mathbf b}(s)
		:=
		s^n - f_1(\mathbf b)s^{n-1} + f_2(\mathbf b)s^{n-2} - f_3(\mathbf b)s^{n-3}
		+\cdots+(-1)^{n-1}f_{n-1}(\mathbf b)s + (-1)^n f_n(\mathbf b).\]
		Then
		\[
		P_{\mathbf a}(s)-P_{\mathbf b}(s)
		=
		\sum_{i=1}^n (-1)^i\bigl(f_i(\mathbf a)-f_i(\mathbf b)\bigr)s^{n-i}.
		\]
		Taking $s=b_k$, we get
		\[
		\left|P_{\mathbf a}(b_k)\right|\leq    \left( |\mathbf a|+1\right)^n  \sum_{i=1}^n \left|f_i(\mathbf a)-f_i(\mathbf b)\right|,
		\]
		where we used the fact that $P_{\mathbf b}(b_k)=0$, together with $|b_k|\leq   |a_k|+\delta\leq |\mathbf a|+1.$
		On the other hand, in view of \eqref{decom1},
		\[
		P_{\mathbf a}(b_k)=(b_k-a_k)^m Q_{\mathbf a}(b_k).
		\]
		So we further get
		\begin{equation}\label{qb02}
			\left|b_k-a_k\right|^m |Q_{\mathbf a}(b_k)|\le  \left( |\mathbf a|+1\right)^n  \sum_{i=1}^n \left|f_i(\mathbf a)-f_i(\mathbf b)\right|.
		\end{equation}
		The desired estimate then follows from \eqref{qb01} and \eqref{qb02}.
		
	\end{proof}

	In the rest of this paper, we only need Lemma \ref{abn} in the cases $n=2$ and $n=3$, which we state as the following two corollaries.
	
	\begin{corollary}\label{ab2}
		Let $f_1,f_2$ be the elementary symmetric polynomials in two variables.
		Let \(\mathbf a = (a_1,a_2) \in \mathbb{R}^2\), and fix \(k \in \{1,2\}\). Then there exist $\delta=\delta(\mathbf a)>0$,  and $C=C(|\mathbf a|)>0$, such that for any \(\mathbf b \in \mathbb{R}^2\) satisfying \(|b_k - a_k| < \delta\), the following hold:
		
		\begin{itemize}
			\item[(i)] If \(a_1 \ne a_2\), then
			\[
			\left|b_k - a_k\right|
			\le
			\frac{C}{|a_1 - a_2|}
			\sum_{i=1}^2 \left|f_i(\mathbf a) - f_i(\mathbf b)\right|.
			\]
			
			\item[(ii)] If \(a_1 = a_2\), then
			\[
			\left|b_k - a_k\right|^2
			\le
			C\sum_{i=1}^2 \left|f_i(\mathbf a) - f_i(\mathbf b)\right|.
			\]
		\end{itemize}
	\end{corollary}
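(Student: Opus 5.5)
This corollary is the case $n=2$ of Lemma \ref{abn}, so the plan is to apply that lemma directly and then read off the multiplicity $m$ and the constant $\rho$ in the two cases. Fix $\mathbf a=(a_1,a_2)\in\mathbb R^2$ and $k\in\{1,2\}$. Let $\delta=\delta(\mathbf a)$ and $C=C(|\mathbf a|)$ be the constants from Lemma \ref{abn} with $n=2$. Since only two choices of $k$ are possible, I would take $\delta$ to be the minimum over $k$ of the lemma's $\delta$, and $C$ to be the maximum over $k$ of its $C$. This way the constants do not depend on $k$, although the statement allows them to.

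For (i), suppose $a_1\neq a_2$. Then $a_k$ appears exactly once in the multiset $\{a_1,a_2\}$, so $m=1<n=2$. The product defining $\rho$ has the single factor indexed by the other element, so $\rho=|a_1-a_2|>0$. Lemma \ref{abn} then gives
\[
|b_k-a_k|\le \frac{C}{|a_1-a_2|}\sum_{i=1}^2|f_i(\mathbf a)-f_i(\mathbf b)|
\]
whenever $|b_k-a_k|<\delta$, which is (i).

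For (ii), suppose $a_1=a_2$. Then $m=n=2$, so $\rho=1$ by definition, and the lemma yields (ii) directly.

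There is no real obstacle in this argument. The only points to check are that $m$ and $\rho$ are computed correctly, and that the dependence of $C$ on $|\mathbf a|$ alone is preserved. The latter holds because, in the proof of Lemma \ref{abn}, one may take $C=2(|\mathbf a|+1)^n$, which does not depend on $k$.
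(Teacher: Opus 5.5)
Your proposal is correct and matches the paper, which obtains Corollary \ref{ab2} simply by specializing Lemma \ref{abn} to $n=2$. Your identifications are accurate: $m=1$ and $\rho=|a_1-a_2|$ in case (i), $m=n=2$ and $\rho=1$ in case (ii), and the constant $C=2(|\mathbf a|+1)^n$ extracted from the lemma's proof.
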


	\begin{corollary}\label{ab3}
		
		Let $f_1,f_2,f_3$ be the elementary symmetric polynomials in three variables.
		Let \(\mathbf a = (a_1,a_2,a_3) \in \mathbb{R}^3\), and fix \(k \in \{1,2,3\}\). Then there exist $\delta=\delta(\mathbf a)>0$,  and $C=C(|\mathbf a|)>0$, such that for any \(\mathbf b \in \mathbb{R}^3\) satisfying \(|b_k - a_k| < \delta\), the following hold:
		
		\begin{itemize}
			\item[(i)] If \(a_k\) appears exactly once in the multiset $\{a_1,a_2,a_3\}$, then
			\[
			\left|b_k - a_k\right|
			\le
			\frac{C}{\left|a_i - a_k\right|\,\left|a_j - a_k\right|}
			\sum_{l=1}^3 \left|f_l(\mathbf a) - f_l(\mathbf b)\right|,
			\]
			where $\{i,j\}=\{1,2,3\}\setminus \{k\}$.
			
			\item[(ii)] If \(a_k\) appears exactly twice in the multiset $\{a_1,a_2,a_3\}$, then
			\[
			\left|b_k - a_k\right|^2
			\le
			\frac{C}{\left| a_i - a_k\right| }
			\sum_{l=1}^3 \left| f_l(\mathbf a) - f_l(\mathbf b) \right|,
			\]
			where $i\in\{1,2,3\}$ is the unique index such that $a_i \neq a_k$.
			
			\item[(iii)] If  $a_1=a_2=a_3$, then
			\[
			\left|b_k - a_k\right|^3
			\le
			C
			\sum_{i=1}^3 \left|f_i(\mathbf a) - f_i(\mathbf b)\right|.
			\]
		\end{itemize}
		
	\end{corollary}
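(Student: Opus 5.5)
This is the case $n=3$ of Lemma \ref{abn}, so the plan is to read off the three multiplicity cases from that lemma and identify $\rho$ in each. Throughout, fix $\mathbf a\in\mathbb R^3$ and $k\in\{1,2,3\}$. Let $m$ be the multiplicity of $a_k$ in the multiset $\{a_1,a_2,a_3\}$, so $m\in\{1,2,3\}$. Lemma \ref{abn} with $n=3$ provides $\delta=\delta(\mathbf a)>0$ and $C=C(|\mathbf a|)>0$ such that, whenever $|b_k-a_k|<\delta$,
\[
|b_k-a_k|^m\le \frac{C}{\rho}\sum_{l=1}^3|f_l(\mathbf a)-f_l(\mathbf b)|.
\]
It remains only to compute $\rho$ in each case.

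For (i), $m=1$. The two indices $i,j$ with $\{i,j\}=\{1,2,3\}\setminus\{k\}$ are exactly the indices with $a_i\neq a_k$, so $\rho=|a_i-a_k|\,|a_j-a_k|$ and the estimate in (i) follows directly.

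For (ii), $m=2<3$. Exactly one index $i$ has $a_i\ne a_k$, so the product defining $\rho$ has a single factor, $\rho=|a_i-a_k|$. This gives (ii).

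For (iii), $m=n=3$. By definition $\rho=1$, which gives (iii).

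The same $\delta$ and $C$ from Lemma \ref{abn} serve in every case; no uniformity in $k$ or $\mathbf a$ beyond what that lemma provides is claimed. Since each case is an immediate substitution, there is no real obstacle. The only point needing care is confirming that the product in $\rho$ runs over the indices with $a_i\neq a_k$ (not all $i\neq k$), which is exactly what produces the stated denominators.
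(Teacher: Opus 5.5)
Your proposal is correct and matches the paper's approach. The paper gives no separate argument: it presents the corollary as the $n=3$ case of Lemma \ref{abn}, and reading off $m\in\{1,2,3\}$ and the corresponding $\rho$, exactly as you do, is all that is required.
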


	\section{Proof of main theorem}\label{sec4}
	
	For simplicity, write \(\mathbf O = \mathbf O_{\bar\omega}\), where \(\bar\omega\) is as in Theorem \ref{thm6d}. Let \(\omega_t := \omega(t,\cdot)\) be a mean-zero \(C^1\) solution of the Euler equation \eqref{euler}. Our goal is to estimate \({\rm dist}_2(\omega_t,\mathbf O)\).
  Denote by \(v_t\) the orthogonal projection of \(\omega_t\) onto \(\mathbf E_1\). Then
	\[
	{\rm dist}_2(\omega_t,\mathbf O)^2
	=
	{\rm dist}_2(\omega_t,\mathbf E_1)^2
	+
	{\rm dist}_2(v_t,\mathbf O)^2.
	\]
	In view of Proposition \ref{prop21}, it suffices to estimate \({\rm dist}_2(v_t,\mathbf O)\).
	To this end, we proceed in several steps.

	\subsection{Reduction to estimates of amplitudes and phases}
	Assume that $v_t$ has the form
	\begin{equation}
		v_t(\mathbf x)=\sum_{i=1}^{3} B_i(t) \cos\left(\mathbf k_i \cdot \mathbf x + \beta_i(t)\right),\qquad B_i(t)\geq 0, \quad \beta_i(t)\in\mathbb R.
	\end{equation}
	Recall that \(\mathbf O\) is determined by \(A_1, A_2, A_3\) and \(\alpha_1, \alpha_2, \alpha_3\), as stated in Theorem \ref{thm6d}.  The following lemma shows that \({\rm dist}_2(v_t,\mathbf O)\) can be expressed explicitly in terms of the amplitudes \(A_i, B_i(t)\) and the phases \(\alpha_i, \beta_i(t)\), where $i=1,2,3$.

	Throughout this section, for notational simplicity, we use $C$ to denote various positive constants \emph{depending only on $\Lambda$ and  $\|\bar\omega\|_{L^2(\mathbb{T}_\Lambda)}$}, whose values may vary from line to line. We also sometimes write \(B_i = B_i(t)\) and \(\beta_i = \beta_i(t)\) when convenient.
	
	\begin{lemma}
		It holds that
		\begin{equation}\label{min}
			\mathrm{dist}_2\bigl(v_t,\mathbf O\bigr)^2
			=C \min_{\theta_1, \theta_2 \in \mathbb{R}} \left(
			\left|B_1- A_1 e^{i\theta_1}\right|^2 +
			\left|B_2-  A_2 e^{i\theta_2}\right|^2 +
			\left|B_3- A_3 e^{i(\theta_1 + \theta_2 +\beta-\alpha)}\right|^2\right),
		\end{equation}
		where
		\begin{equation}\label{deofab}
			\alpha:=\alpha_1 + \alpha_2 - \alpha_3,\quad  \beta:=\beta_1 + \beta_2 - \beta_3.
		\end{equation}
	\end{lemma}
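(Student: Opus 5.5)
The plan is to compute the distance directly, using the explicit form of the translational orbit and the orthogonality of the six basis functions in \eqref{e1df}.

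First, translating $\bar\omega$ by $\mathbf p\in\mathbb T_\Lambda$ gives
\[
\bar\omega(\mathbf x-\mathbf p)=\sum_{i=1}^3A_i\cos(\mathbf k_i\cdot\mathbf x+\alpha_i-\mathbf k_i\cdot\mathbf p).
\]
Set $\phi_i:=\mathbf k_i\cdot\mathbf p$. Since $\mathbf k_3=\mathbf k_1+\mathbf k_2$, we have $\phi_3=\phi_1+\phi_2$. Because $\mathbf k_1,\mathbf k_2$ are linearly independent, we want $(\phi_1,\phi_2)$ to range over all of $\mathbb R^2$ modulo $2\pi$ as $\mathbf p$ runs over $\mathbb R^2$. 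Periodicity is automatic, since $\mathbf k_i$ lie in $2\pi$ times the dual lattice, so translations by $\Lambda$ change $\phi_i$ by multiples of $2\pi$. Thus $\mathbf O$ consists exactly of the functions $\sum_i A_i\cos(\mathbf k_i\cdot\mathbf x+\alpha_i-\phi_i)$ with $\phi_1,\phi_2$ arbitrary and $\phi_3=\phi_1+\phi_2$.

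Next, we use that $\cos(\mathbf k_i\cdot\mathbf x)$ and $\sin(\mathbf k_i\cdot\mathbf x)$, $i=1,2,3$, are mutually orthogonal in $L^2(\mathbb T_\Lambda)$ with common squared norm $|\mathbb T_\Lambda|/2$. This holds because $\pm\mathbf k_i$ are distinct nonzero dual vectors. Writing $B\cos(\mathbf k\cdot\mathbf x+\beta)=\mathrm{Re}\bigl(Be^{i\beta}e^{i\mathbf k\cdot\mathbf x}\bigr)$, the squared $L^2$ distance between $B\cos(\mathbf k\cdot\mathbf x+\beta)$ and $A\cos(\mathbf k\cdot\mathbf x+\alpha')$ equals $\frac{|\mathbb T_\Lambda|}{2}\left|Be^{i\beta}-Ae^{i\alpha'}\right|^2$. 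Summing over the three modes gives
\[
\|v_t-\bar\omega(\cdot-\mathbf p)\|_{L^2}^2=\frac{|\mathbb T_\Lambda|}{2}\sum_{i=1}^3\left|B_i-A_ie^{i(\alpha_i-\phi_i-\beta_i)}\right|^2 .
\]

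Finally, we reparametrize by $\theta_j:=\alpha_j-\phi_j-\beta_j$ for $j=1,2$. As $\phi_1,\phi_2$ range freely, so do $\theta_1,\theta_2$. The third phase is then
\[
\alpha_3-\phi_1-\phi_2-\beta_3=\theta_1+\theta_2-(\alpha_1+\alpha_2-\alpha_3)+(\beta_1+\beta_2-\beta_3)=\theta_1+\theta_2+\beta-\alpha .
\]
Taking the infimum over $\mathbf p$ gives \eqref{min} with $C=|\mathbb T_\Lambda|/2$. The infimum is a minimum by continuity and compactness of the torus of phases.

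The only point needing care is the surjectivity of $\mathbf p\mapsto(\phi_1,\phi_2)$ modulo $2\pi$. I expect it to follow immediately from the invertibility of the $2\times2$ matrix with rows $\mathbf k_1,\mathbf k_2$.
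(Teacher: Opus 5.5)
Your proposal is correct and follows essentially the same route as the paper: you expand the translated orbit using $\mathbf k_3=\mathbf k_1+\mathbf k_2$, use the orthogonality of the six modes to reduce the distance to complex moduli, and reparametrize the phases to obtain the combination $\theta_1+\theta_2+\beta-\alpha$. You additionally make explicit the constant $C=|\mathbb T_\Lambda|/2$ and the surjectivity of $\mathbf p\mapsto(\mathbf k_1\cdot\mathbf p,\mathbf k_2\cdot\mathbf p)$ modulo $2\pi$, both of which the paper leaves implicit.
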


	\begin{proof}
		Using the fact that $\cos(\mathbf k_i\cdot \mathbf x)$ and $\sin(\mathbf k_i\cdot \mathbf x)$, $i=1,2,3$ are mutually orthogonal in $\mathring L^2(\mathbb T_\Lambda)$, we can calculate as follows:
		\begin{equation}\notag
			\begin{split}
				&\mathrm{dist}_2\bigl(v_t,\mathbf O\bigr)^2\\
				=&\min_{\mathbf p\in\mathbb T_{\Lambda}}\|v_t(\mathbf x)-\bar\omega(\mathbf x-\mathbf p) \|^2_{L^2(\mathbb T_\Lambda)}\\
				=&\min_{\mathbf p\in\mathbb T_{\Lambda}}\int_{\mathbb T_{\Lambda}}\left(\sum_{i=1}^{3} B_i\cos\left(\mathbf k_i \cdot \mathbf x + \beta_i\right)-\sum_{i=1}^{3} A_i \cos\left(\mathbf k_i \cdot (\mathbf x-\mathbf p) + \alpha_i\right)\right)^2 \dd\mathbf x\\
				=&\min_{\theta_1, \theta_2 \in \mathbb{R}}\int_{\mathbb T_{\Lambda}}\left(B_1 \cos\left(\mathbf k_1 \cdot \mathbf x + \beta_1\right)- A_1 \cos\left(\mathbf k_1 \cdot \mathbf x+\theta_1+ \alpha_1\right)\right)^2\\
				&\quad\quad+\left(B_2 \cos\left(\mathbf k_2 \cdot \mathbf x + \beta_2\right)-A_2 \cos\left(\mathbf k_2 \cdot \mathbf x+\theta_2
				+ \alpha_2\right)\right)^2 \\
				&\quad\quad+\left(B_3 \cos\left(\mathbf k_3 \cdot \mathbf x + \beta_3\right)-A_3 \cos\left(\mathbf k_3 \cdot \mathbf x+\theta_1+\theta_2
				+ \alpha_3\right)\right)^2 \dd\mathbf x\\
				=&C \min_{\theta_1, \theta_2 \in \mathbb{R}}\left(B_1 \cos\beta_1-A_1 \cos\left(\theta_1
				+  \alpha_1\right)\right)^2+\left(B_1 \sin\beta_1- A_1 \sin\left(\theta_1
				+ \alpha_1\right)\right)^2\\
				&\quad\quad+\left(B_2 \cos\beta_2- A_2 \cos\left(\theta_2
				+  \alpha_2\right)\right)^2+\left(B_2 \sin\beta_2- A_2 \sin\left(\theta_2
				+ \alpha_2\right)\right)^2\\
				&\quad\quad+\left(B_3\cos\beta_3- A_3 \cos\left(\theta_1+\theta_2
				+  \alpha_3\right)\right)^2+\left(B_3 \sin\beta_3-A_3 \sin\left(\theta_1+\theta_2
				+ \alpha_3\right)\right)^2\\
				=&C \min_{\theta_1, \theta_2 \in \mathbb{R}}\left(
				\left|B_1 e^{i\beta_1}-A_1e^{i(\theta_1+\alpha_1)}\right|^2+
				\left|B_2 e^{i\beta_2}-A_2e^{i(\theta_2+\alpha_2)}\right|^2+
				\left|B_3 e^{i\beta_3}-A_3e^{i(\theta_1+\theta_2+\alpha_3)}\right|^2\right)\\
				=&C \min_{\theta_1, \theta_2 \in \mathbb{R}} \left(
				\left|B_1-A_1 e^{i\theta_1}\right|^2 +
				\left|B_2-A_2 e^{i\theta_2}\right|^2 +
				\left|B_3-A_3 e^{i(\theta_1 + \theta_2 +\beta-\alpha)}\right|^2\right).
			\end{split}
		\end{equation}
	\end{proof}


	\subsection{Casimirs and estimates involving elementary symmetric polynomials}

	Given that ${\rm dist}_2(\omega_0,\mathbf O)<\varepsilon$, by replacing $\bar\omega$ with another element of $\mathbf O$ if necessary, we may assume  that
	\[{\rm dist}_2(\omega_0,\mathbf O)= \|\omega_0-\bar\omega\|_{L^2(\mathbb T_\Lambda)}<\varepsilon.\]
	Without loss of generality,  we also assume that $0<\varepsilon<1.$

	\begin{lemma}\label{lemlip}
		Let $k$ be a positive integer. Then
		\[
		\left|\mathsf C_k(v_t)-\mathsf C_k(\bar\omega)\right|<C_k\varepsilon,\quad \forall\,t\in\mathbb R,
		\]
		where $\mathsf C_k$ is the $k$-th order Casimir given by \eqref{kcm}, and $C_k>0$  depends on  $k,\Lambda$ and $\|\bar\omega\|_{L^2(\mathbb T_\Lambda)}.$
	\end{lemma}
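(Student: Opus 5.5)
The plan is to replace $\mathsf C_k$ by a conserved Casimir $\mathsf C_F$ that is globally Lipschitz on $L^2$. The new Casimir must agree with $\mathsf C_k$ on a bounded piece of the finite-dimensional space $\mathbf E_1$, where all relevant functions are uniformly bounded pointwise. The estimate then follows from conservation of $\mathsf C_F$, its Lipschitz bound, and Proposition \ref{prop21}.

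\emph{Step 1 (uniform pointwise bound on $\mathbf E_1$).} We have $\|v_t\|_{L^2}\le \|\omega_t\|_{L^2}$ because $v_t$ is an orthogonal projection. The enstrophy $\mathsf C_2$ is conserved, so $\|\omega_t\|_{L^2}=\|\omega_0\|_{L^2}\le \|\bar\omega\|_{L^2}+1=:R$, using $\varepsilon<1$. Since $\mathbf E_1$ is spanned by the six orthogonal functions $\cos(\mathbf k_i\cdot\mathbf x),\sin(\mathbf k_i\cdot\mathbf x)$, which all have the same $L^2$ norm $c_\Lambda>0$, any $v\in\mathbf E_1$ satisfies
\[
\|v\|_{L^\infty}\le \sum(\text{coefficients})\le \sqrt6\,c_\Lambda^{-1}\|v\|_{L^2}.
\]
Hence $|v_t|\le M:=\sqrt6\,c_\Lambda^{-1}R$ and $|\bar\omega|\le M$ pointwise. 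The constant $M$ depends only on $\Lambda$ and $\|\bar\omega\|_{L^2}$.

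\emph{Step 2 (Lipschitz modification).} Define $F\in C(\mathbb R)$ by $F(s)=s^k$ for $|s|\le M$, and extend $F$ affinely outside $[-M,M]$ with slope $kM^{k-1}$ at $s=M$ and $F'(-M)$ at $s=-M$. Then $F$ is globally Lipschitz with constant $L=kM^{k-1}$. By Cauchy--Schwarz, for all $f,g\in L^2$,
\[
|\mathsf C_F(f)-\mathsf C_F(g)|\le L\,|\mathbb T_\Lambda|^{1/2}\|f-g\|_{L^2}.
\]
Moreover, Step 1 gives $\mathsf C_F(v_t)=\mathsf C_k(v_t)$ and $\mathsf C_F(\bar\omega)=\mathsf C_k(\bar\omega)$.

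\emph{Step 3 (conclusion).} Since $\mathsf C_F(\omega_t)=\mathsf C_F(\omega_0)$, we get
\[
|\mathsf C_k(v_t)-\mathsf C_k(\bar\omega)|\le |\mathsf C_F(v_t)-\mathsf C_F(\omega_t)|+|\mathsf C_F(\omega_0)-\mathsf C_F(\bar\omega)|\le L|\mathbb T_\Lambda|^{1/2}\bigl(\|w_t\|_{L^2}+\varepsilon\bigr).
\]
Because $\bar\omega\in\mathbf E_1$, we have $\|w_0\|_{L^2}={\rm dist}_2(\omega_0,\mathbf E_1)\le\|\omega_0-\bar\omega\|_{L^2}<\varepsilon$. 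Proposition \ref{prop21} then gives $\|w_t\|_{L^2}\le\sqrt{\lambda_2/(\lambda_2-\lambda_1)}\,\varepsilon$. This yields the claim with
\[
C_k=kM^{k-1}|\mathbb T_\Lambda|^{1/2}\Bigl(1+\sqrt{\lambda_2/(\lambda_2-\lambda_1)}\Bigr).
\]

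The only point needing care is Step 1. The uniform $L^\infty$ bound on $v_t$ must be independent of $t$ and of the perturbation. It comes from conservation of enstrophy together with the equivalence of norms on the finite-dimensional space $\mathbf E_1$. Without this bound the truncation in Step 2 would not reproduce $\mathsf C_k(v_t)$.
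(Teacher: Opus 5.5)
Your proof is correct and is essentially the paper's argument. The paper also replaces $\mathsf C_k$ by a conserved, globally Lipschitz Casimir that coincides with $s^k$ on $[-M,M]$, obtains $M$ from enstrophy conservation, $\varepsilon<1$ and norm equivalence on $\mathbf E_1$, and concludes via the triangle inequality and Proposition \ref{prop21}. The differences are cosmetic: the paper truncates with a smooth cut-off $\omega^k\chi(\omega/M)$ (Lipschitz constant $(2M)^{k-1}(k+4)$), whereas your affine extension gives $kM^{k-1}$ and an explicit $L^\infty$--$L^2$ constant on $\mathbf E_1$. Keep $\|\omega_0-\bar\omega\|_{L^2}<\varepsilon$ strict in Step 3 to obtain the strict inequality.
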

	\begin{proof}
		Let $\chi\in C^\infty(\mathbb R)$ be an even cut-off satisfying
		\[
		\chi(s)=
		\begin{cases}
			1, & \mbox{if } |s|\leq 1, \\
			0, & \mbox{if } |s|\geq 2,
		\end{cases}\quad\mbox{and}\quad \|\chi'\|_{L^\infty(\mathbb R)}\leq 2.\]
		Set
		\[
		M:=
		C \left(\|\bar\omega\|_{L^2(\mathbb T_\Lambda)}+1\right),
		\]
		where $C$ is chosen such that
		\begin{equation}\label{fdneq}
			\|v\|_{L^\infty(\mathbb T_\Lambda)}
			\le C\|v\|_{L^2(\mathbb T_\Lambda)},
			\qquad \forall\, v\in\mathbf E_1.
		\end{equation}
		Note that such $C$ exists since $\mathbf E_1$ is finite-dimensional. Define the modified $k$-th Casimir by
		\begin{equation}\label{MCK}
			\tilde{\mathsf C}_k(\omega)
			:=
			\int_{\mathbb T_\Lambda}
			\omega^k\,
			\chi \left(\frac{\omega}{M}\right)\dd{\mathbf x},
		\end{equation}
		which is conserved under the Euler dynamics.
		Then, by a straightforward computation, for any $v_1,v_2\in L^2(\mathbb T_\Lambda)$,
		\begin{equation}\label{CK}
			\begin{split}
				|\tilde {\mathsf C}_k(v_1)-\tilde  {\mathsf C}_k(v_2)|&=\left|\int_{\mathbb T_\Lambda}
				h(v_1)-h(v_2)\dd{\mathbf x} \right|\quad\quad\left(h(s):=s^k
				\chi \left(\frac{s}{M}\right)\right)\\
				&\leq \|h'\|_{L^\infty(\mathbb R)}\|v_1-v_2\|_{L^1(\mathbb T_\Lambda)}\\
				&\le(2M)^{k-1}(k+4) \left|\mathbb T_{\Lambda}\right|^{1/2} \|v_1-v_2\|_{L^2(\mathbb T_\Lambda)}.
			\end{split}
		\end{equation}
		Using \eqref{CK}, Proposition \ref{prop21}, and the fact that $\tilde{\mathsf C}_k$ is conserved,
		\[
		\begin{aligned}
			|\tilde  {\mathsf C}_k(v_t)-\tilde  {\mathsf C}_k(\bar\omega)|
			&\le
			\left|\tilde  {\mathsf C}_k(v_t)-\tilde  {\mathsf C}_k(\omega_t)\right|
			+
			\left|\tilde  {\mathsf C}_k(\omega_t)-\tilde  {\mathsf C}_k(\bar\omega)\right| \\
			&=
			\left|\tilde  {\mathsf C}_k(v_t)-\tilde  {\mathsf C}_k(\omega_t)\right|
			+
			\left|\tilde  {\mathsf C}_k(\omega_0)-\tilde  {\mathsf C}_k(\bar\omega)\right| \\
			&\le
			(2M)^{k-1}(k+4) \left|\mathbb T_{\Lambda}\right|^{1/2} \left(\|v_t-\omega_t\|_{L^2(\mathbb T_\Lambda)}
			+
			\|\omega_0-\bar\omega\|_{L^2(\mathbb T_\Lambda)}\right) \\
			&\leq (2M)^{k-1}(k+4) \left|\mathbb T_{\Lambda}\right|^{1/2} \left(\sqrt{\frac{\lambda_2}{\lambda_2-\lambda_1}}\|v_0-\omega_0\|_{L^2(\mathbb T_\Lambda)}
			+
			\|\omega_0-\bar\omega\|_{L^2(\mathbb T_\Lambda)}\right) \\
			&\le
			(2M)^{k-1}(k+4) \left|\mathbb T_{\Lambda}\right|^{1/2} \left(1+\sqrt{\frac{\lambda_2}{\lambda_2-\lambda_1}}\right)
			\|\omega_0-\bar\omega\|_{L^2(\mathbb T_\Lambda)}.
		\end{aligned}
		\]
		Thus
		\[
		\left|\tilde  {\mathsf C}_k(v_t)-\tilde  {\mathsf C}_k(\bar\omega)\right|
		<
		C_k\varepsilon,
		\quad
		C_k:=(2M)^{k-1}(k+4) \left|\mathbb T_{\Lambda}\right|^{1/2}\left(1+\sqrt{\frac{\lambda_2}{\lambda_2-\lambda_1}}\right).
		\]
		To conclude the proof, it remains to show that
		\[
		\tilde  {\mathsf C}_k(\bar\omega)= {\mathsf C}_k(\bar\omega),\qquad \tilde  {\mathsf C}_k(v_t)=  {\mathsf C}_k(v_t).
		\]
		In view of the definition of $\tilde  {\mathsf C}_k$ and the fact that  $\chi\equiv 1$ in $[-M,M]$, it suffices to prove that
		\begin{equation}\label{tdeq}\|\bar\omega\|_{L^\infty(\mathbb T_\Lambda)}\leq M,\qquad \|v_t\|_{L^\infty(\mathbb T_\Lambda)}\leq M. \end{equation}
		For $\bar\omega$,
		by the definition of $M$,
		\[
		\|\bar\omega\|_{L^\infty(\mathbb T_\Lambda)}\leq C\|\bar\omega\|_{L^2(\mathbb T_\Lambda)}\leq M.
		\]
		As to  $v_t$, recalling that  $v_t$ is the orthogonal projection of $\omega_t$ onto $\mathbf E_1$, we have
		\begin{equation}\label{vtbdd}
			\begin{split}
				\|v_t\|_{L^2(\mathbb T_\Lambda)}
				\le \|\omega_t\|_{L^2(\mathbb T_\Lambda)}
				= \|\omega_0\|_{L^2(\mathbb T_\Lambda)}
				&\le \|\bar\omega\|_{L^2(\mathbb T_\Lambda)}
				+\|\omega_0-\bar\omega\|_{L^2(\mathbb T_\Lambda)}\\
				&< \|\bar\omega\|_{L^2(\mathbb T_\Lambda)}
				+\varepsilon\\
				& < \|\bar\omega\|_{L^2(\mathbb T_\Lambda)}
				+1.
			\end{split}
		\end{equation}
		Therefore,
		\[
		\|v_t\|_{L^\infty(\mathbb T_\Lambda)}
		\le
		C \|v_t\|_{L^2(\mathbb T_\Lambda)}
		<
		C \bigl(\|\bar\omega\|_{L^2(\mathbb T_\Lambda)}+1\bigr)
		= M.
		\]
		Hence \eqref{tdeq} has been verified, and the proof is complete.
	\end{proof}
	
	Let $f_1,f_2,f_3$ be the elementary symmetric polynomials in three variables. Denote
	\[\mathbf a=(A_1^2,A_2^2,A_3^2),\qquad \mathbf b=(B_1^2,B_2^2,B_3^2).\]
	From Lemma \ref{lemlip}, we can prove the following lemma.
	\begin{lemma}\label{c3-f-control}
		For any $t\in\mathbb R,$ it holds that
		\begin{equation}\label{C3lip}
			\bigl|B_1B_2B_3\cos\beta-A_1A_2A_3\cos\alpha \bigr|< C\varepsilon
		\end{equation}
		and
		\begin{equation}\label{f-control}
			\left|f_i(\mathbf a)-f_i(\mathbf b)\right|< C\varepsilon,
			\qquad i=1,2,3.
		\end{equation}
		
	\end{lemma}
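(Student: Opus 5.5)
The plan is to compute the Casimirs $\mathsf C_2,\mathsf C_3,\mathsf C_4,\mathsf C_6$ of a general element $v=\sum_{i=1}^3 B_i\cos(\mathbf k_i\cdot\mathbf x+\beta_i)\in\mathbf E_1$ explicitly as polynomials in $B_i$ and $\cos\beta$. We then invert these relations, using Lemma \ref{lemlip} to pass the $C\varepsilon$ bounds from the Casimirs to the elementary symmetric polynomials.

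\textbf{Expanding the powers.} Write $\phi_i=\mathbf k_i\cdot\mathbf x+\beta_i$ and expand $v^k$ via $\cos\phi_i=\tfrac12(e^{i\phi_i}+e^{-i\phi_i})$. A monomial integrates to a nonzero value only if the corresponding integer combination $\sum_i (p_i-q_i)\mathbf k_i$ vanishes. Since $\mathbf k_1,\mathbf k_2$ are linearly independent and $\mathbf k_3=\mathbf k_1+\mathbf k_2$, this happens exactly when $p_1-q_1=p_2-q_2=-(p_3-q_3)=:r$. In that case the phase of the monomial is $r(\beta_1+\beta_2-\beta_3)=r\beta$.

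Applying this degree by degree:
\begin{itemize}
\item $\mathsf C_2(v)=c_2\,(B_1^2+B_2^2+B_3^2)=c_2 f_1(\mathbf b)$.
\item $\mathsf C_3(v)=c_3\,B_1B_2B_3\cos\beta$. Only $r=\pm1$ with one factor from each mode contributes. I expect $c_3=\tfrac34|\mathbb T_\Lambda|$, but only the fact that $c_3\neq0$ matters.
\item $\mathsf C_4(v)=\alpha_4 \sum_i B_i^4+\beta_4\sum_{i<j}B_i^2B_j^2$, with $\alpha_4\ne\beta_4$. No $r\neq0$ term can occur in degree $4$, because $r=\pm1$ already uses three factors and the remaining single factor cannot pair with itself. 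So $\mathsf C_4=\alpha_4 f_1^2+(\beta_4-2\alpha_4)f_2$.
\item $\mathsf C_6(v)=P(f_1,f_2,f_3)+\kappa\,(B_1B_2B_3)^2\cos 2\beta$. The $r=0$ part is a symmetric polynomial in $B_i^2$ of degree $3$. The $r=\pm2$ part gives $(B_1B_2B_3)^2\cos2\beta$.
\end{itemize}

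\textbf{Deriving \eqref{C3lip} and \eqref{f-control}.} Estimate \eqref{C3lip} follows directly from Lemma \ref{lemlip} with $k=3$, since $\mathsf C_3(\bar\omega)=c_3A_1A_2A_3\cos\alpha$. From $k=2$ we get $|f_1(\mathbf a)-f_1(\mathbf b)|<C\varepsilon$. From $k=4$, using $\beta_4-2\alpha_4\neq0$ and that $f_1$ is bounded by $C(\|\bar\omega\|_{L^2}^2+1)$ by \eqref{vtbdd}, we obtain the bound for $f_2$.

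For $f_3$, rewrite $\cos2\beta=2\cos^2\beta-1$, so that
\[
(B_1B_2B_3)^2\cos2\beta=2(B_1B_2B_3\cos\beta)^2-f_3(\mathbf b).
\]
The first term is controlled by \eqref{C3lip}, using boundedness of $B_i$. Hence $\mathsf C_6$ equals a polynomial in $f_1,f_2$ plus $(c'-\kappa)f_3$ plus a term already controlled. Since all arguments are bounded, every polynomial involved is Lipschitz on the relevant set, so the $f_3$ bound follows from $k=6$.

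\textbf{Main obstacle.} The delicate step is the explicit bookkeeping of the degree-$6$ coefficients: I must verify that the net coefficient of $f_3$ is nonzero once the $\cos2\beta$ contribution is absorbed. The plan is to compute it by direct multinomial counting.

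If this coefficient should vanish, the fallback is to use $\mathsf C_5$ or another even Casimir such as $\mathsf C_8$ to isolate $f_3$. We would do this in the same way, again using \eqref{C3lip} to remove the phase-dependent terms.
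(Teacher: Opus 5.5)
Your proposal follows the paper's proof. You compute $\mathsf C_2,\mathsf C_3,\mathsf C_4,\mathsf C_6$ on $\mathbf E_1$, and rewrite $\cos 2\beta=2\cos^2\beta-1$ so that the phase enters $\mathsf C_6$ only through $\mathsf C_3^2$. You then invert the triangular system for $f_1,f_2,f_3$ using Lemma~\ref{lemlip} and the bounds \eqref{vtbdd}.

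The step you flagged as the main obstacle does go through. The $r=0$ part of $\mathsf C_6$ is $P=\frac{5}{16}|\mathbb T_\Lambda|\,(f_1^3+6f_1f_2+12f_3)$, and $\kappa=\frac{45}{16}|\mathbb T_\Lambda|$. So the net coefficient of $f_3$ is $\frac{15}{4}|\mathbb T_\Lambda|-\frac{45}{16}|\mathbb T_\Lambda|=\frac{15}{16}|\mathbb T_\Lambda|\neq 0$. This is exactly the paper's identity $\frac{16}{5|\mathbb T_\Lambda|}\mathsf C_6-\frac{8}{|\mathbb T_\Lambda|^2}\mathsf C_3^2=f_1^3+6f_1f_2+3f_3$.

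Three small corrections:
\begin{itemize}
\item $c_3=\frac32|\mathbb T_\Lambda|$, not $\frac34|\mathbb T_\Lambda|$: it is the multinomial factor $6$ times the mean $\frac14\cos\beta$.
\item In degree $4$ the condition you actually use is $\beta_4\neq 2\alpha_4$, not $\alpha_4\neq\beta_4$. It holds, since $\alpha_4=\frac38|\mathbb T_\Lambda|$ and $\beta_4=\frac32|\mathbb T_\Lambda|$.
\item Your $\mathsf C_5$ fallback would have failed. By your own parity count, only $r=\pm1$ survives in degree $5$, giving $\mathsf C_5(v)=\frac{15}{4}|\mathbb T_\Lambda|\,f_1(\mathbf b)\,B_1B_2B_3\cos\beta$. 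This is a function of $\mathsf C_2$ and $\mathsf C_3$ alone, which is why the paper remarks that $k=5$ yields no independent constraint.
\end{itemize}
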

	
	\begin{proof}
		We compute the Casimirs $ {\mathsf C}_k(\bar \omega)$ for $k=2,3,4,6$ with the help of Maple:
		\begin{equation}\label{C2346}
			\begin{split}
				& {\mathsf C}_2(\bar \omega)=
				\frac{|\mathbb T_\Lambda|}{2}\left(A_1^2+A_2^2+A_3^2\right),\\
				& {\mathsf C}_3(\bar \omega)
				=\frac{3\left|\mathbb T_\Lambda\right|}{2} A_1 A_2 A_3\cos\alpha,\\
				& {\mathsf C}_4(\bar \omega)
				=\frac{3\left|\mathbb T_\Lambda\right|}{8} \left({A_1^4}+{A_2^4}+{A_3^4}+{4A_1^2A_2^2}+{4A_1^2A_3^2}
				+{4A_2^2A_3^2} \right),\\
				& {\mathsf C}_6(\bar \omega)
				=\frac{5\left|\mathbb T_\Lambda\right|}{16}\Bigl({A_1^6}+{A_2^6}+{A_3^6}+{9A_2^2A_3^4}+{9A_2^4A_3^2}
				+{9A_1^4A_2^2}+{9A_1^4A_3^2}
				+{9A_1^2A_2^4}+{9A_1^2A_3^4}\\
				&\qquad\qquad
				+{9A_1^2A_2^2A_3^2\cos(2\alpha)}
				+{36A_1^2A_2^2A_3^2}
				\Bigr).
			\end{split}
		\end{equation}
		Note that taking $k=5$ does not give an independent constraint. A similar computation holds for $\mathsf C_k(v_t)$ with $A_1,A_2,A_3,\alpha$ replaced by 		$B_1,B_2,B_3,\beta$. Then \eqref{C3lip} follows by applying Lemma \ref{lemlip} with $k=3$.  To prove \eqref{f-control}, we observe that from \eqref{C2346},
		\begin{equation}\label{f123b}
			\begin{cases}
				f_1(\mathbf a)= \frac{2}{\left|\mathbb T_\Lambda\right|} {\mathsf C}_2(\bar \omega), \\[1ex]
				f_1^2(\mathbf a)+2f_2(\mathbf a)= \frac{8}{3\left|\mathbb T_\Lambda\right|} {\mathsf C}_4(\bar \omega), \\[1ex]
				f_1^3(\mathbf a)+6f_1(\mathbf a)f_2(\mathbf a)+3f_3(\mathbf a)
				=
				\frac{16}{5\left|\mathbb T_\Lambda\right|} {\mathsf C}_6(\bar \omega)-\frac{8}{\left|\mathbb T_\Lambda\right|^2} {\mathsf C}_3(\bar \omega)^2,
			\end{cases}
		\end{equation}
		and
		\begin{equation}\label{f123t}
			\begin{cases}
				f_1(\mathbf b)= \frac{2}{\left|\mathbb T_\Lambda\right|} {\mathsf C}_2(v_t), \\[1ex]
				f_1^2(\mathbf b)+2f_2(\mathbf b)= \frac{8}{3\left|\mathbb T_\Lambda\right|} {\mathsf C}_4(v_t), \\[1ex]
				f_1^3(\mathbf b)+6f_1(\mathbf b)f_2(\mathbf b)+3f_3(\mathbf b)
				=
				\frac{16}{5\left|\mathbb T_\Lambda\right|} {\mathsf C}_6(v_t)-\frac{8}{\left|\mathbb T_\Lambda\right|^2} {\mathsf C}_3(v_t)^2.
			\end{cases}
		\end{equation}
		Based on \eqref{f123b}, \eqref{f123t} and Lemma \ref{lemlip}, we analyze as follows:
		\begin{itemize}
			\item [(i)] From \eqref{f123b}$_1$ and \eqref{f123t}$_1$, we get \eqref{f-control} with $i=1$.	
			\item [(ii)] From \eqref{f123b}$_2$ and \eqref{f123t}$_2$, we get
			\begin{align*}
				\left|f_2(\mathbf a)-f_2(\mathbf b)\right|&< C \varepsilon+C\left|f_1^2 (\mathbf a)-f_1^2 (\mathbf b)\right| \\
				&< C\varepsilon+C\varepsilon(\left|\mathbf a\right|+\left|\mathbf b\right|)\\
				&< C\varepsilon,
			\end{align*}
			where we used  the estimate for $i=1$, and
			\begin{equation}\label{ababsv}
				|\mathbf a|\leq C\|\bar\omega\|^2_{L^2(\mathbb T_\Lambda)},\quad |\mathbf b|\leq C\|v_t\|^2_{L^2(\mathbb T_\Lambda)}\leq C\left(\|\bar\omega\|^2_{L^2(\mathbb T_\Lambda)}+1\right) \quad\mbox{(by \eqref{vtbdd})}.
			\end{equation}
			\item [(iii)] From \eqref{f123b}$_3$ and \eqref{f123t}$_3$, we get
			\begin{align*}
				\left|f_3(\mathbf a)-f_3(\mathbf b)\right|
				&< C \varepsilon+ C\varepsilon\left|\mathsf C_3(\bar\omega)+\mathsf C_3(v_t)\right|+C|f_1 (\mathbf a)f_2(\mathbf a)-f_1 (\mathbf b)f_2(\mathbf b)|+C|f^3_1 (\mathbf a) -f^3_1 (\mathbf b)|\\
				&< C \varepsilon+ C\varepsilon\left(\|\bar\omega\|^3_{L^\infty(\mathbb T_\Lambda)}+\|v_t\|^3_{L^\infty(\mathbb T_\Lambda)}+|\mathbf a|+|\mathbf b|^2+|\mathbf a|^2+|\mathbf b|^4\right)\\
				&< C\varepsilon,
			\end{align*}
			where we used \eqref{fdneq}, \eqref{ababsv} and the estimates for $i=1,2$.
		\end{itemize}
		The proof is complete.
	\end{proof}

	\subsection{Estimate of $|b_k(t)-a_k|$}
	
	Let  $k\in\{1,2,3\}$ be fixed.
	Combining Lemma \ref{c3-f-control} with Corollary \ref{ab3}, we obtain  the following estimate of $|b_k(t)- a_k|$ for all $t\in\mathbb R$.
	\begin{lemma}\label{abk}
		There exists  $\varepsilon_0=\varepsilon_0(\Lambda,\bar\omega)>0,$ such that if $\varepsilon<\varepsilon_0$, then the following hold:
		\begin{itemize}
			\item[(i)] If $a_k$ appears exactly once in the multiset $\{a_1, a_2, a_3\}$, then
			\begin{equation}\label{ak1}
				\left|b_k(t) - a_k\right|
				<
				\frac{C \varepsilon}{\left|a_i - a_k\right| \, \left|a_j - a_k\right|},\quad\forall\,t\in\mathbb R,
			\end{equation}
			where $\{i,j\} = \{1,2,3\} \setminus \{k\}$.
			
			\item[(ii)]  If $a_k$ appears exactly twice in the multiset $\{a_1, a_2, a_3\}$, then
			\begin{equation}\label{ak2}
				\left|b_k(t) - a_k\right|
				<
				\frac{C\varepsilon^{1/2}}{\left|a_i - a_k\right|^{1/2}},\quad\forall\,t\in\mathbb R,
			\end{equation}
			where $i\in\{1,2,3\}$ is the unique index such that $a_i \neq a_k$.	
			\item[(iii)]  If $a_1=a_2=a_3$, then
			\begin{equation}\label{ak3}
				\left|b_k(t) - a_k\right|
				<
				C  \varepsilon^{1/3},\quad\forall\,t\in\mathbb R.
			\end{equation}
		\end{itemize}
	\end{lemma}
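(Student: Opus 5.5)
The plan is to feed the bound \eqref{f-control} of Lemma \ref{c3-f-control} directly into Corollary \ref{ab3}, applied with $\mathbf a=(A_1^2,A_2^2,A_3^2)$ and $\mathbf b=(B_1^2(t),B_2^2(t),B_3^2(t))$. Corollary \ref{ab3} gives a $\delta=\delta(\mathbf a)>0$ and a constant $C=C(|\mathbf a|)$. Since $|\mathbf a|\le C\|\bar\omega\|^2_{L^2(\mathbb T_\Lambda)}$ by \eqref{ababsv}, that constant is of the allowed type. The Corollary only applies under the hypothesis $|b_k(t)-a_k|<\delta$, and Lemma \ref{c3-f-control} only controls symmetric functions of $\mathbf b$, not $b_k$ itself. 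So the first and main step is to secure this closeness hypothesis uniformly in $t$.

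To obtain it, I will use a root-continuity argument rather than continuity in time. Consider the monic cubic $P_{\mathbf b}(s)=(s-b_1)(s-b_2)(s-b_3)$. By \eqref{f-control}, its coefficients differ from those of $P_{\mathbf a}$ by less than $C\varepsilon$. Its roots are bounded, since $|\mathbf b|\le C(\|\bar\omega\|^2_{L^2}+1)$ by \eqref{ababsv}.

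For $s$ on the circle $|s-a_k|=r$, with $r<\delta$ chosen smaller than half the distance from $a_k$ to the other distinct values among the $a_i$, we have $|P_{\mathbf a}(s)|\ge c(r,\mathbf a)>0$ and $|P_{\mathbf a}(s)-P_{\mathbf b}(s)|\le C\varepsilon$. Choosing $\varepsilon_0$ so that $C\varepsilon_0<c$, Rouché's theorem shows that $P_{\mathbf b}$ has exactly $m$ roots (counted with multiplicity) in each such disc around each distinct value of $\mathbf a$. Since all roots of $P_{\mathbf b}$ are accounted for by these discs, the multiset $\{b_1,b_2,b_3\}$ matches $\{a_1,a_2,a_3\}$ cluster by cluster, each $b$ lying within $r$ of a corresponding $a$.

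The point to handle carefully here is the labeling: a priori $b_k$ could sit near some $a_j\ne a_k$ rather than near $a_k$. I would first try to rule this out by continuity in $t$. Each $b_i(t)$ is continuous in $t$, because $v_t$ is continuous in $L^2$. For $t\in\mathbb R$ the set of roots always stays inside the disjoint union of the discs. At $t=0$ we have $\|v_0-\bar\omega\|<\varepsilon$, which gives $|B_k(0)^2-A_k^2|\le C\varepsilon$, so $b_k(0)$ is in the disc around $a_k$. Each $b_k(t)$ is continuous and confined to a disjoint union of discs, so it can never jump to another disc. Hence $|b_k(t)-a_k|<r<\delta$ for all $t$. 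If continuity in $t$ is not available, one can instead invoke the symmetric estimate directly on the matched labeling and note that the conclusion only concerns distances.

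With the hypothesis in hand, the three cases follow by plugging \eqref{f-control}, namely $\sum_l|f_l(\mathbf a)-f_l(\mathbf b)|<3C\varepsilon$, into the corresponding case of Corollary \ref{ab3}:
\begin{itemize}
\item case (i) gives \eqref{ak1} directly;
\item case (ii) gives $|b_k-a_k|^2<C\varepsilon/|a_i-a_k|$, and taking square roots yields \eqref{ak2};
\item case (iii) gives $|b_k-a_k|^3<C\varepsilon$, hence \eqref{ak3}.
\end{itemize}
Finally, $\varepsilon_0$ depends on $\mathbf a$ through $\delta$, $r$ and $c$, and therefore only on $\Lambda$ and $\bar\omega$, as required.
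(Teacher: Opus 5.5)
Your proposal is correct and follows essentially the paper's route. Both arguments plug \eqref{f-control} into Corollary \ref{ab3} and secure the hypothesis $|b_k(t)-a_k|<\delta$ by continuity in $t$, starting from $|b_k(0)-a_k|<C\varepsilon$. The only difference is the forbidden region: the paper rules out the value $|b_k-a_k|=\delta/2$ by applying the Corollary's own bound at that point, whereas you rule out the circle $|s-a_k|=r$ via a lower bound on $|P_{\mathbf a}|$ there (the full Rouch\'e root count is more than you need). Drop the fallback remark about the ``matched labeling'', though. Without continuity in $t$, nothing prevents $b_k$ from sitting near some $a_j\neq a_k$, and the stated bounds concern the specific label $k$, which is exactly what the later distance estimates use.
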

	
	\begin{proof}
		We prove only (iii), as the other two cases follow by a similar argument. By Corollary \ref{ab3} and \eqref{f-control} in Lemma \ref{c3-f-control},   there exists
		$\delta=\delta(\mathbf a)>0$ such that
		\begin{equation}\label{c1o3}
			\left|b_k(t)-a_k\right|<\delta\quad\Longrightarrow\quad  |b_k(t) - a_k|
			<
			C  \varepsilon^{1/3}
		\end{equation}
		for any $t\in\mathbb R.$
		To finish the proof, it remains to show
		\begin{equation}\label{lesdelta}
			|b_k(t)-a_k|<\delta,\quad \forall\,t\in\mathbb R,
		\end{equation}
		if $\varepsilon$ is sufficiently small. To this end, recall that
		\[
		\|v_0-\bar\omega\|_{L^2(\mathbb T_\Lambda)}
		\leq
		\|\omega_0-\bar\omega\|_{L^2(\mathbb T_\Lambda)}<\varepsilon,
		\]
		which implies that
		\begin{equation}\label{a0-close-b}
			\left|b_k(0)-a_k\right|< C \varepsilon<C\varepsilon^{1/3}.
		\end{equation}
		Take $\varepsilon_0=\varepsilon_0(\Lambda,\bar\omega)>0$   such that $C\varepsilon^{1/3}<\delta/3$ for any $\varepsilon\in(0,\varepsilon_0)$. We now verify \eqref{lesdelta} by contradiction. Suppose that  $\left|b_k(t^*)-a_k\right|\geq \delta$ at some $t^*\in\mathbb R$. Then, by continuity, there exists $t^{**}\in\mathbb R$ such that  $\left|b_k(t^{**})-a_k\right|=\delta/2,$ which in combination with \eqref{c1o3} gives   $\left|b_{k}(t^{**})-a_k\right|<C\varepsilon^{1/3}<\delta/3,$ a contradiction. The proof is complete.
		
	\end{proof}

	\subsection{Concluding the proof: $A_1A_2A_3=0$}

	Without loss of generality, we may assume that $A_3=0$. Then \eqref{min} becomes
	\[
	\mathrm{dist}_2\bigl(v_t,\mathbf O\bigr)^2
	=C\left(\left|B_1-A_1\right|^2 + \left|B_2-A_2\right|^2 + \left| B_3\right|^2\right).
	\]
	So
	\[
	\mathrm{dist}_2\bigl(v_t,\mathbf O\bigr)
	<C\left(\left|B_1-A_1\right|  + \left|B_2-A_2\right| + B_3 \right).
	\]
	Now we estimate $\left|B_1-A_1\right|$, $\left|B_2-A_2\right|$ and $B_3$ based on Lemma \ref{abk}  (recall that $\mathbf a=\left(A_1^2,A_2^2,A_3^2\right), \mathbf b=\left(B_1^2,B_2^2,B_3^2\right)$). We distinguish three cases.

\medskip
	\noindent {\bf Case 1:  $A_1A_2 \neq 0$ and $A_1 \neq A_2$.} In this case,
	\[
	\left|B_1^2 - A_1^2\right| < \frac{C\varepsilon}{\left|\left(A_2^2 - A_1^2\right)\left(A_3^2 - A_1^2\right)\right|},\quad
	\left|B_2^2 - A_2^2\right| < \frac{C\varepsilon}{\left|\left(A_1^2 - A_2^2\right)\left(A_3^2 - A_2^2\right)\right|},\quad
	B_3^2 < \frac{C\varepsilon}{A_1^2 A_2^2},
	\]
	which yields
	\[
	\left|B_1 - A_1\right| < \frac{C\varepsilon}{\left|\left(A_2^2 - A_1^2\right)\left(A_3^2 - A_1^2\right)\left(B_1 + A_1\right)\right|}<\frac{C\varepsilon}{\left|\left(A_2^2 - A_1^2\right)\left(A_3^2 - A_1^2\right)\right| A_1 },\]
	\[
	\left|B_2 - A_2\right| < \frac{C\varepsilon}{\left|\left(A_1^2 - A_2^2\right)\left(A_3^2 - A_2^2\right)\left(B_2 + A_2\right)\right|}<\frac{C\varepsilon}{\left|\left(A_1^2 - A_2^2\right)\left(A_3^2 - A_2^2\right)\right| A_2 },
	\] and
	\[B_3 < \frac{C\varepsilon^{1/2}}{A_1 A_2 }.\]
	Thus
	\begin{equation*}
		\begin{split}
			\mathrm{dist}_2\bigl(v_t,\mathbf O\bigr)
			&<\frac{C  \varepsilon }{\left|\left(A_2^2-A_1^2\right) \left(A_3^2-A_1^2\right)\right|
				A_1 }+\frac{C  \varepsilon }{\left|\left(A_1^2-A_2^2\right) \left(A_3^2-A_2^2\right)\right|
				A_2 }+\frac{C  \varepsilon^{1/2}}{ A_1A_2}\\
			&\leq \frac{C  \varepsilon^{1/2}}{ A_1A_2}.
		\end{split}
	\end{equation*}
	Note that we may need to choose a smaller $\varepsilon_0$ so that the last inequality holds.
	
	\medskip
	\noindent {\bf Case 2: $A_1A_2\neq 0$ and $A_1=A_2$.} In this case,
	\[
	\left|B_1^2-A_1^2\right|+\left|B_2^2-A_2^2\right|< \frac{C\varepsilon^{1/2}}{A_1},
	\qquad
	B_3^2< \frac{C \varepsilon}{A_1^4},
	\]
	which yields
	\[\left|B_1 -A_1\right| <  \frac{C\varepsilon^{1/2}}{A_1^2},\quad
	\left|B_2 -A_2\right| <  \frac{C\varepsilon^{1/2}}{A_1^2},\quad B_3<\frac{C\varepsilon^{1/2}}{A_1^2}.\]
	Thus
	\begin{equation*}
		\mathrm{dist}_2\bigl(v_t,\mathbf O\bigr) <\frac{C\varepsilon^{1/2}}{A_1^2}.
	\end{equation*}

\medskip
	\noindent {\bf Case 3:  $A_1A_2=0$.} Without loss of generality, we may assume that $A_2=0$. Then
	\[
	\left|B_1^2-A_1^2\right|<\frac{C \varepsilon}{A_1^4},
	\quad
	B_2^2+B_3^2< \frac{C \varepsilon^{1/2}}{A_1},
	\]
	which gives
	\[
	\left|B_1 -A_1 \right|<\frac{C \varepsilon}{A_1^5},
	\quad
	B_2 < \frac{C \varepsilon^{1/4}}{A^{1/2}_1},\quad	B_3 < \frac{C \varepsilon^{1/4}}{A^{1/2}_1}.
	\]
	Thus, by choosing a smaller $\varepsilon_0$ if necessary, we  get
	\[	\mathrm{dist}_2\bigl(v_t,\mathbf O\bigr) <\frac{C\varepsilon^{1/4}}{A_1^{1/2}}.\]
	This completes the proof of Theorem \ref{thm6d} in the case $A_1A_2A_3=0$.
	
	\subsection{Concluding the proof: $A_1A_2A_3\neq 0$}
	In this case, we further estimate $\mathrm{dist}_2\bigl(v_t,\mathbf O\bigr)$ according to \eqref{min} as follows:
	\begin{equation}\label{min_control}
		\begin{split}
			&\mathrm{dist}_2\bigl(v_t,\mathbf O\bigr)^2\\
			=&C\min_{\theta_1, \theta_2 \in \mathbb{R}} \left(
			\left|B_1-A_1 e^{i\theta_1}\right|^2 +
			\left|B_2-A_2 e^{i\theta_2}\right|^2 +
			\left|B_3-A_3 e^{i(\theta_1 + \theta_2 +\beta-\alpha)}\right|^2\right)\\
			=&C\min_{\theta_1, \theta_2 \in \mathbb{R}} \left(
			\left|B_1-A_1 e^{i\theta_1}\right|^2 +
			\left|B_2-A_2 e^{i\theta_2}\right|^2 +
			\left|B_3e^{-i(\beta-\alpha)}-A_3 e^{i(\theta_1 + \theta_2)}\right|^2\right)\\
			\leq&C\min_{\theta_1, \theta_2 \in \mathbb{R}} \left(
			\left|B_1-A_1 e^{i\theta_1}\right|^2 +
			\left|B_2-A_2 e^{i\theta_2}\right|^2  + \left(
			\left|B_3e^{-i(\beta-\alpha)}-A_3\right|+
			\left|A_3-A_3 e^{i(\theta_1 + \theta_2)}\right|\right)^2\right)\\
			=&C \left(
			\left|B_1 -A_1\right|^2 +
			\left|B_2 -A_2\right|^2 +
			\left|B_3e^{-i\beta}-A_3e^{-i\alpha}\right|^2\right).
		\end{split}
	\end{equation}
	So
	\begin{equation*}
		\mathrm{dist}_2\bigl(v_t,\mathbf O\bigr)
		\leq C \left(
		\left|B_1 -A_1\right| +
		\left|B_2 -A_2\right| +
		\left|B_3e^{-i\beta}-A_3e^{-i\alpha}\right|\right).
	\end{equation*}
	To estimate $\left|B_1-A_1\right| $, $\left|B_2 -A_2\right|$ and $\left|B_3e^{-i\beta}-A_3e^{-i\alpha}\right|$, we distinguish three cases.
	
	\medskip
	\noindent\textbf{Case 1: $A_1, A_2, A_3$  are pairwise distinct.}
	In this case, based on Lemma \ref{abk}, we have the following estimates on $|B_k-A_k|$, $k=1,2,3$:
	\begin{equation}\label{bkak1}
		\begin{split}
			&\left|B_1^2-A_1^2\right|
			<\frac{C\varepsilon}{\left|A_2^2-A_1^2\right|
				\,\left|A_3^2-A_1^2\right|}
			\quad\Longrightarrow\quad
			\left|B_1 -A_1 \right|
			<\frac{C\varepsilon}{\left|A_2^2-A_1^2\right|
				\,\left|A_3^2-A_1^2\right|A_1},\\
			&\left|B_2^2-A_2^2\right|<\frac{C\varepsilon}{\left|A_1^2-A_2^2\right|
				\,\left|A_3^2-A_2^2\right|}\quad\Longrightarrow\quad  \left|B_2 -A_2 \right|<\frac{C\varepsilon}{\left|A_1^2-A_2^2\right|
				\,\left|A_3^2-A_2^2\right|A_2},\\
			&\left|B_3^2-A_3^2\right|<\frac{C\varepsilon}{\left|A_1^2-A_3^2\right|\,\left|A_2^2-A_3^2\right|}\quad\Longrightarrow\quad  \left|B_3 -A_3 \right|<\frac{C\varepsilon}{\left|A_1^2-A_3^2\right|\,\left|A_2^2-A_3^2\right|A_3}.
		\end{split}
	\end{equation}
	To estimate $|B_3 e^{-i\beta}-A_3e^{-i\alpha}|
	$,  notice that \[|B_3 e^{-i\beta}-A_3e^{-i\alpha}|
	\leq |B_3\cos\beta-A_3\cos\alpha|+|B_3\sin\beta-A_3\sin\alpha|.\]
	So it suffices to estimate $|B_3\cos\beta-A_3\cos\alpha|$ and $|B_3\sin\beta-A_3\sin\alpha|$.
	To estimate $|B_3\cos\beta-A_3\cos\alpha|$, notice that
	\begin{align*}
		\left|B_3\cos\beta-A_3\cos\alpha\right| &=\frac{
			\left|B_1B_2B_3\cos\beta-A_1A_2A_3\cos\alpha +A_1A_2A_3\cos\alpha
			-B_1B_2A_3\cos\alpha\right|
		}{ B_1B_2 }\\
		&\leq
		\frac{
			\left|B_1B_2B_3\cos\beta-A_1A_2A_3\cos\alpha\right|
			+
			A_3\left|B_1B_2-A_1A_2\right|
		}{ B_1B_2 }\\
		&\leq  \frac{
			\left|B_1B_2B_3\cos\beta-A_1A_2A_3\cos\alpha\right|
			+
			A_3\left(\left|B_1B_2-A_1B_2\right|+\left|A_1B_2-A_1A_2\right|\right)
		}{ B_1B_2 }.
	\end{align*}
	By choosing a smaller $\varepsilon_0$ if necessary, we may assume that $A_i/2<B_i< 3A_i/2$, $i=1,2,3$. Then, in view of \eqref{C3lip}  and \eqref{bkak1},
	\begin{equation}\label{BACOS1}
		\left|B_3\cos\beta-A_3\cos\alpha\right|  \leq \frac{C\varepsilon}{A_1A_2}+\frac{CA_3\varepsilon}{A_1^2\left|A_2^2-A_1^2\right|
			\,\left|A_3^2-A_1^2\right|}
		+\frac{CA_3\varepsilon}{A_2^2\left|A_1^2-A_2^2\right| \,\left|A_3^2-A_2^2\right|}.
	\end{equation}
	We now turn to the estimate of $|B_3\sin\beta-A_3\sin\alpha|$. Notice that
	\begin{align*}
		\left| \left(B_3\sin\beta\right)^2-\left(A_3\sin\alpha\right)^2\right|
		=&\left| B_3^2-A_3^2+  \left(A_3\cos\alpha\right)^2-\left(B_3\cos\beta\right)^2 \right|\\
		\leq & \left(B_3+A_3\right) \left(\left|B_3-A_3 \right|
		+\left|B_3\cos\beta-A_3\cos\alpha \right|\right).
	\end{align*}
	Taking into account \eqref{bkak1}$_3$ and \eqref{BACOS1},
	\begin{equation}\label{b2a2}
		\begin{split}
			&\left| B_3\sin\beta-A_3\sin\alpha\right|\,\left| B_3\sin\beta+A_3\sin\alpha\right|\\
			\leq& C\left( \frac{1}{\left|A_1^2-A_3^2\right|
				\,\left|A_2^2-A_3^2\right|}
			+\frac{A_3^2}{\left|A_2^2-A_1^2\right|
				\,\left|A_3^2-A_1^2\right|A_1^2}
			+\frac{A_3^2}{\left|A_1^2-A_2^2\right|
				\,\left|A_3^2-A_2^2\right|A_2^2}+ \frac{A_3}{A_1A_2}\right)\varepsilon.
		\end{split}
	\end{equation}
	We further distinguish two cases.
	
	\medskip
	\noindent{\bf Case 1.1: $\sin\alpha=0$. }
	In this case, $|B_3\sin\beta \pm A_3\sin\alpha|
	=|B_3\sin\beta|$. Thus, in view of \eqref{b2a2},
	\begin{equation*}
		\begin{split}
			&|B_3\sin\beta-A_3\sin\alpha|\\
			\leq& C\left(
			\frac{1}{|A_1^2-A_3^2||A_2^2-A_3^2|}+
			\frac{A_3^2}{|A_2^2-A_1^2||A_3^2-A_1^2|A_1^2}
			+\frac{A_3^2}{|A_1^2-A_2^2||A_3^2-A_2^2|A_2^2}
			+\frac{A_3}{A_1A_2}
			\right)^{1/2}\varepsilon^{1/2}.
		\end{split}
	\end{equation*}
	Combining this estimate with \eqref{bkak1} and \eqref{BACOS1}, and
	choosing $\varepsilon_0>0$ smaller if necessary, we obtain
	\begin{equation*}
		\begin{split}
			&\mathrm{dist}_2\bigl(v_t,\mathbf O\bigr)\\
			\leq&C\left(
			\frac{1}{|A_1^2-A_3^2||A_2^2-A_3^2|}+
			\frac{A_3^2}{|A_2^2-A_1^2||A_3^2-A_1^2|A_1^2}
			+\frac{A_3^2}{|A_1^2-A_2^2||A_3^2-A_2^2|A_2^2}
			+\frac{A_3}{A_1A_2}
			\right)^{1/2}\varepsilon^{1/2}.
		\end{split}
	\end{equation*}

	\medskip
	\noindent{\bf Case 1.2: $\sin\alpha\neq 0$.}
	In this case, we have
	\begin{equation}\label{positive_lower_bound}
		0<2A_3|\sin\alpha|
		\leq
		\left|B_3\sin\beta-A_3\sin\alpha\right|
		+
		\left|B_3\sin\beta+A_3\sin\alpha\right|.
	\end{equation}
	Combining \eqref{b2a2} with \eqref{positive_lower_bound}, we obtain the following pointwise-in-time dichotomy. For every $t\in\mathbb R$, at least one of the following two alternatives holds:
	\begin{equation}\label{alternative-new-1}
		\begin{aligned}
			&\left|B_3\sin\beta-A_3\sin\alpha\right|\\
			\leq&
			\frac{C\varepsilon}{|\sin\alpha|}
			\Bigg(
			\frac{1}
			{A_3|A_1^2-A_3^2||A_2^2-A_3^2|}
			+
			\frac{A_3}
			{|A_2^2-A_1^2||A_3^2-A_1^2|A_1^2}
			+
			\frac{A_3}
			{|A_1^2-A_2^2||A_3^2-A_2^2|A_2^2}
			+
			\frac{1}{A_1A_2}
			\Bigg),
		\end{aligned}
	\end{equation}
	or
	\begin{equation}\label{alternative-new-2}
		\begin{aligned}
			&\left|B_3\sin\beta+A_3\sin\alpha\right|\\
			\leq&
			\frac{C\varepsilon}{|\sin\alpha|}
			\Bigg(
			\frac{1}
			{A_3|A_1^2-A_3^2||A_2^2-A_3^2|}
			+
			\frac{A_3}
			{|A_2^2-A_1^2||A_3^2-A_1^2|A_1^2}
			+
			\frac{A_3}
			{|A_1^2-A_2^2||A_3^2-A_2^2|A_2^2}
			+
			\frac{1}{A_1A_2}
			\Bigg).
		\end{aligned}
	\end{equation}
	Indeed, by \eqref{positive_lower_bound}, at least one of $\left|B_3\sin\beta-A_3\sin\alpha\right|$, $\left|B_3\sin\beta+A_3\sin\alpha\right|$ is not smaller than $A_3|\sin\alpha|$. Dividing the product estimate
	\eqref{b2a2} by this lower bound gives either \eqref{alternative-new-1}
	or \eqref{alternative-new-2}. The constants in this dichotomy are independent
	of $t$, but the dichotomy is, at this stage, only pointwise in time.
	
	We next show that, for sufficiently small $\varepsilon$, the second alternative cannot occur. If
	\eqref{alternative-new-1} holds at a time \(t\), then combining
	\eqref{bkak1}, \eqref{BACOS1}, and \eqref{alternative-new-1}, we obtain
	\begin{equation}\label{dist-O-case12}
		\begin{aligned}
			&\mathrm{dist}_2(v_t,\mathbf O)\\
			\leq\;&
			\frac{C\varepsilon}
			{|A_2^2-A_1^2||A_3^2-A_1^2|}
			\left(
			\frac{1}{A_1}
			+
			\frac{A_3}{A_1^2}
			+
			\frac{A_3}{A_1^2|\sin\alpha|}
			\right)
			+
			\frac{C\varepsilon}
			{|A_1^2-A_2^2||A_3^2-A_2^2|}
			\left(
			\frac{1}{A_2}
			+
			\frac{A_3}{A_2^2}
			+
			\frac{A_3}{A_2^2|\sin\alpha|}
			\right)\\
			&+
			\frac{C\varepsilon}
			{|A_1^2-A_3^2||A_2^2-A_3^2|A_3|\sin\alpha|}
			+
			\frac{C\varepsilon}{A_1A_2}
			+
			\frac{C\varepsilon}{A_1A_2|\sin\alpha|}.
		\end{aligned}
	\end{equation}
	On the other hand, if \eqref{alternative-new-2} holds at a time \(t\),
	then the same argument, applied to the conjugate phase configuration, gives
	\begin{equation}\label{dist-conj-case12}
		\begin{aligned}
			&\mathrm{dist}_2(v_t,\widetilde{\mathbf O})\\
			\leq\;&
			\frac{C\varepsilon}
			{|A_2^2-A_1^2||A_3^2-A_1^2|}
			\left(
			\frac{1}{A_1}
			+
			\frac{A_3}{A_1^2}
			+
			\frac{A_3}{A_1^2|\sin\alpha|}
			\right)
			+
			\frac{C\varepsilon}
			{|A_1^2-A_2^2||A_3^2-A_2^2|}
			\left(
			\frac{1}{A_2}
			+
			\frac{A_3}{A_2^2}
			+
			\frac{A_3}{A_2^2|\sin\alpha|}
			\right)
			\\
			&+
			\frac{C\varepsilon}
			{|A_1^2-A_3^2||A_2^2-A_3^2|A_3|\sin\alpha|}
			+
			\frac{C\varepsilon}{A_1A_2}
			+
			\frac{C\varepsilon}{A_1A_2|\sin\alpha|},
		\end{aligned}
	\end{equation}
	where
	\[
	\widetilde{\mathbf O}
	:=
	\left\{
	\widetilde\omega(\cdot-\mathbf p)
	\mid
	\mathbf p\in\mathbb T_{\Lambda}
	\right\},
	\qquad
	\widetilde\omega(\mathbf x)
	=
	\sum_{i=1}^3 A_i\cos(\mathbf k_i\cdot\mathbf x-\alpha_i).
	\]
	Since $\sin\alpha\neq0$, $\mathbf O\neq \widetilde{\mathbf O}.$
	By Lemma B.1(3) in \cite{W252}, the two compact orbits are separated in
	\(L^2\). Thus
	\[
	d_0:=\mathrm{dist}_2(\mathbf O,\widetilde{\mathbf O})>0.
	\]
	Choose \(\varepsilon_0>0\) sufficiently small such that, whenever
	\(0<\varepsilon<\varepsilon_0\), the right-hand sides of both
	\eqref{dist-O-case12} and \eqref{dist-conj-case12} are smaller than
	\(d_0/3\). We also choose \(\varepsilon_0\) so that
	\[
	\mathrm{dist}_2(v_0,\mathbf O)<\frac{d_0}{3}.
	\]
	
	Define
	\[
	I_1:=
	\left\{
	t\in\mathbb R:
	\mathrm{dist}_2(v_t,\mathbf O)<\frac{d_0}{3}
	\right\},
	\qquad
	I_2:=
	\left\{
	t\in\mathbb R:
	\mathrm{dist}_2(v_t,\widetilde{\mathbf O})<\frac{d_0}{3}
	\right\}.
	\]
	The pointwise dichotomy above implies that $I_1\cup I_2=\mathbb R.$
	Moreover,
	\[
	I_1\cap I_2=\varnothing.
	\]
	Indeed, if \(t\in I_1\cap I_2\), then
	\[
	d_0
	=
	\mathrm{dist}_2(\mathbf O,\widetilde{\mathbf O})
	\leq
	\mathrm{dist}_2(v_t,\mathbf O)
	+
	\mathrm{dist}_2(v_t,\widetilde{\mathbf O})
	<
	\frac{2d_0}{3},
	\]
	which is impossible. Since \(t\mapsto v_t\) is continuous in \(L^2\), both
	\(I_1\) and \(I_2\) are open subsets of \(\mathbb R\). Furthermore,
	\(0\in I_1\). Since \(\mathbb R\) is connected, we must have
	\[
	I_2=\varnothing.
	\]
	Therefore the second alternative \eqref{alternative-new-2} never occurs, and
	\eqref{alternative-new-1} holds for all $t\in\mathbb R$.
	Combining this with \eqref{bkak1} and \eqref{BACOS1}, we conclude that
	\eqref{dist-O-case12} holds for all $t\in\mathbb R$.

	\medskip
	\noindent\textbf{Case 2: Exactly two of $A_1, A_2, A_3$ are equal.}
	Without loss of generality, we may assume that $A_1=A_2$, and then, based on Lemma \ref{abk}, we have the following estimates on $|B_k-A_k|$, $k=1,2,3$.
	\begin{equation}\label{bkak_2}
		\begin{split}
			&\left|B_1^2-A_1^2\right|
			<\frac{C\varepsilon^{1/2}}{\left|A_3^2-A_1^2\right|^{1/2}}
			\quad\Longrightarrow\quad
			\left|B_1 -A_1 \right|
			<\frac{C\varepsilon^{1/2}}{\left|A_3^2-A_1^2\right|^{1/2}A_1},\\
			&\left|B_2^2-A_2^2\right|<\frac{C\varepsilon^{1/2}}{\left|A_3^2-A_1^2\right|^{1/2}}\quad\Longrightarrow\quad  \left|B_2 -A_2 \right|<\frac{C\varepsilon^{1/2}}{\left|A_3^2-A_1^2\right|^{1/2}A_1},\\
			&\left|B_3^2-A_3^2\right|<\frac{C\varepsilon}{\left|A_1^2-A_3^2\right|^2}\quad\Longrightarrow\quad  \left|B_3 -A_3 \right|<\frac{C\varepsilon}{\left|A_1^2-A_3^2\right|^2A_3}.
		\end{split}
	\end{equation}
	The same estimate as in the previous case, with the bounds in \eqref{C3lip} and
	\eqref{bkak_2} substituted, gives

	\begin{equation}\label{BACOS_2}
		\begin{split}
			|B_3\cos\beta-A_3\cos\alpha|
			\leq \frac{C\varepsilon}{A_1^2}
			+\frac{CA_3\varepsilon^{1/2}}{A_1^2\left|A_3^2-A_1^2\right|^{1/2}}.
		\end{split}
	\end{equation}
	The same argument as above gives
	\begin{equation}\label{b2a2_2}
		\left|B_3\sin\beta-A_3\sin\alpha\right|\,\left|B_3\sin\beta+A_3\sin\alpha\right|
		\leq \frac{C A_3^2\varepsilon^{1/2}}{A_1^2\left|A_3^2-A_1^2\right|^{1/2}}
		+\frac{C\varepsilon}{\left|A_1^2-A_3^2\right|^2}+ \frac{CA_3\varepsilon}{A_1^2}.
	\end{equation}
	
	\medskip
	\noindent{\bf Case 2.1: $\sin\alpha=0$. } By \eqref{b2a2_2},
	\begin{equation*}
		\begin{split}
			|B_3\sin\beta-A_3\sin\alpha|
			\leq
			\frac{C A_3\varepsilon^{1/4}}{A_1\left|A_3^2-A_1^2\right|^{1/4}}
			+\frac{C\varepsilon^{1/2}}{\left|A_1^2-A_3^2\right|}+ \frac{CA_3^{1/2}\varepsilon^{1/2}}{A_1}.
		\end{split}
	\end{equation*}
	Choosing $\varepsilon_0>0$ smaller if necessary, we have
	\begin{equation*}
		\begin{split}
			\mathrm{dist}_2\bigl(v_t,\mathbf O\bigr)
			\leq & \frac{CA_3\varepsilon^{1/4}}{\left|A_3^2-A_1^2\right|^{1/4}
				A_1}.
		\end{split}
	\end{equation*}
	
	\medskip
	\noindent{\bf Case 2.2: $\sin\alpha\neq 0$.} As in Case 1.2, the pointwise dichotomy obtained from \eqref{b2a2_2} and \eqref{positive_lower_bound} has two branches. The branch corresponding to the conjugate orbit is excluded by the
	continuity of $t\mapsto v_t$ in $L^2$, the positive distance between
	$\mathbf O$ and the conjugate orbit, and the initial closeness to
	$\mathbf O$.
	From \eqref{b2a2_2}, we have that
	\begin{equation*}
		\left|B_3\sin\beta-A_3\sin\alpha\right|
		\leq \frac{C A_3\varepsilon^{1/2}}{A_1^2\left|A_3^2-A_1^2\right|^{1/2}\left|\sin\alpha\right|}
		+\frac{C\varepsilon}{\left|A_1^2-A_3^2\right|^2A_3\left|\sin\alpha\right|}+ \frac{C\varepsilon}{A_1^2\left|\sin\alpha\right|}.
	\end{equation*}
	Then,
	choosing $\varepsilon_0>0$ smaller if necessary, we obtain
	\begin{equation*}
		\begin{split}
			\mathrm{dist}_2\bigl(v_t,\mathbf O\bigr)
			\leq  \frac{C\varepsilon^{1/2}}{\left|A_3^2-A_1^2\right|^{1/2}}\left(\frac{A_3}{A_1^2\left|\sin\alpha\right|}+\frac{A_3}{A_1^2}+\frac{1}{A_1}\right).
		\end{split}
	\end{equation*}

	\medskip
	\noindent\textbf{Case 3: $A_1=A_2=A_3$.}
	Based on Lemma \ref{abk}, we have
	\begin{equation}\label{bkak_3}
		\begin{split}
			&\left|B_k^2-A_k^2\right|
			<{C\varepsilon^{1/3}}
			\quad\Longrightarrow\quad
			\left|B_k -A_k \right|
			<\frac{C\varepsilon^{1/3}}{A_1},\quad k=1,2,3.
		\end{split}
	\end{equation}
	Repeating the argument leading to \eqref{BACOS1}, we obtain
	\begin{equation}\label{BACOS_3}
		\begin{split}
			|B_3\cos\beta-A_3\cos\alpha|
			\leq \frac{C\varepsilon}{A_1^2}
			+\frac{C\varepsilon^{1/3}}{A_1}.
		\end{split}
	\end{equation}
	Repeating the argument leading to \eqref{b2a2}, we obtain
	\begin{equation}\label{b2a2_3}
		\begin{split}
			\left|B_3\sin\beta-A_3\sin\alpha\right|\,\left|B_3\sin\beta+A_3\sin\alpha\right|
			\leq C \varepsilon^{1/3}+\frac{C\varepsilon}{A_1}.
		\end{split}
	\end{equation}
	
	\medskip
	\noindent{\bf Case 3.1: $\sin\alpha= 0$.} By \eqref{b2a2_3},
	\begin{equation*}
		\begin{split}
			|B_3\sin\beta-A_3\sin\alpha|\leq C \varepsilon^{1/6}+\frac{C\varepsilon^{1/2}}{A_1^{1/2}}.
		\end{split}
	\end{equation*}
	Therefore,
	\begin{equation*}
		\begin{split}
			\mathrm{dist}_2\bigl(v_t,\mathbf O\bigr)
			\leq  C\varepsilon^{1/6},
		\end{split}
	\end{equation*}
	after choosing $\varepsilon_0$ smaller if necessary.
	
	\medskip
	\noindent{\bf Case 3.2: $\sin\alpha\neq 0$.} As in Case 1.2, the branch corresponding to the conjugate orbit is excluded by the continuity of $t\mapsto v_t$ in $L^2$, the positive distance between $\mathbf O$ and the conjugate orbit, and the initial closeness to $\mathbf O$.
	From \eqref{b2a2_3}, we have that
	\begin{equation*}
		\begin{split}
			\left|B_3\sin\beta-A_3\sin\alpha\right|
			\leq \frac{C \varepsilon^{1/3}}{\left|\sin\alpha\right|A_1}+\frac{C \varepsilon}{\left|\sin\alpha\right|A_1^2}.
		\end{split}
	\end{equation*}
	Therefore,
	\begin{equation*}
		\begin{split}
			\mathrm{dist}_2\bigl(v_t,\mathbf O\bigr)
			\leq &\frac{C \varepsilon^{1/3}}{\left|\sin\alpha\right|\,A_1}+\frac{C \varepsilon^{1/3}}{A_1},
		\end{split}
	\end{equation*}
	after choosing $\varepsilon_0$ smaller if necessary.


	
	\begin{remark}\label{symmetry-degeneracy}
		We briefly explain how the estimates in Theorem \ref{thm6d} deteriorate as the amplitude configuration becomes more symmetric, with a further loss in the degenerate phase case $\sin\alpha=0$. Assume that ${\rm dist}_{2}(\omega_0,\mathbf O)<\varepsilon$, $A_1,A_2,A_3$ are pairwise distinct and $\sin\alpha\neq 0$. Then the proof yields
		\begin{equation}\label{ABDa}
			\begin{split}
				&\mathrm{dist}_2\bigl(\omega_t,\mathbf O\bigr)\\
				\leq &
				\frac{C\varepsilon}{\left|A_2^2-A_1^2\right|\,\left|A_3^2-A_1^2\right|}
				\left(\frac{A_3}{A_1^2}+\frac{1}{A_1}+\frac{A_3}{A_1^2\left|\sin\alpha\right|}\right)
				+
				\frac{C\varepsilon}{\left|A_2^2-A_1^2\right|\,\left|A_3^2-A_2^2\right|}
				\left(\frac{A_3}{A_2^2}+\frac{1}{A_2}+\frac{A_3}{A_2^2\left|\sin\alpha\right|}\right)\\
				&+
				\frac{C\varepsilon}{\left|A_1^2-A_3^2\right|\,\left|A_2^2-A_3^2\right|\,\left|\sin\alpha\right|A_3}
				+\frac{C\varepsilon}{A_1A_2}+\frac{C\varepsilon}{A_1A_2\left|\sin\alpha\right|}.
			\end{split}
		\end{equation}
		This estimate shows explicitly how the stability deteriorates as the amplitude configuration becomes more symmetric. Suppose that $A_2\to A_1$ while $A_3$ remains fixed in \eqref{ABDa}. Since the factor $|A_2^2-A_1^2|^{-1}$
		blows up, the preceding estimate degenerates into
		\begin{equation}\label{AABa}
			\begin{split}
				\mathrm{dist}_2\bigl(\omega_t,\mathbf O\bigr)
				\leq
				\frac{C\varepsilon^{1/2}}{\left|A_3^2-A_1^2\right|^{1/2}}
				\left(\frac{A_3}{A_1^2\left|\sin\alpha\right|}
				+\frac{A_3}{A_1^2}
				+\frac{1}{A_1}\right).
			\end{split}
		\end{equation}
		Thus the stability weakens from order $O(\varepsilon)$ to order $O(\varepsilon^{1/2})$. If $A_1$, $A_2$, and $A_3$ all become close to one another, then \eqref{AABa} degenerates further into
		\begin{equation}\label{AAAa}
			\begin{split}
				\mathrm{dist}_2\bigl(\omega_t,\mathbf O\bigr)
				\leq
				\frac{C\varepsilon^{1/3}}{\left|\sin\alpha\right|A_1}
				+\frac{C\varepsilon^{1/3}}{A_1}.
			\end{split}
		\end{equation}
		Hence the stability order drops from $O(\varepsilon^{1/2})$ to $O(\varepsilon^{1/3})$.
		
		We next explain how the phase factor $\sin\alpha$ affects the stability estimate. Examining the three cases above separately, we observe the same phenomenon: as $\sin\alpha$ tends to zero, the stability estimate deteriorates.
		When $\left|\sin\alpha\right|$ approaches $0$, the bound \eqref{ABDa} degenerates to
		\begin{equation}\label{ABD0}
			\begin{split}
				&\mathrm{dist}_2\bigl(\omega_t,\mathbf O\bigr)\\
				\leq &
				C\left(
				\frac{A_3^2}{|A_2^2-A_1^2||A_3^2-A_1^2|A_1^2}
				+\frac{A_3^2}{|A_1^2-A_2^2||A_3^2-A_2^2|A_2^2}
				+\frac{1}{|A_1^2-A_3^2||A_2^2-A_3^2|}
				+\frac{A_3}{A_1A_2}
				\right)^{1/2}\varepsilon^{1/2}.
			\end{split}
		\end{equation}
		Hence the stability order drops from $O(\varepsilon)$ to $O(\varepsilon^{1/2})$.
		And, \eqref{AABa} further degenerates to
		\begin{equation}\label{AAB0}
			\begin{split}
				\mathrm{dist}_2\bigl(\omega_t,\mathbf O\bigr)
				\leq
				\frac{CA_3\varepsilon^{1/4}}{\left|A_3^2-A_1^2\right|^{1/4}A_1}.
			\end{split}
		\end{equation}
		Accordingly, the stability order drops from $O(\varepsilon^{1/2})$ to $O(\varepsilon^{1/4})$.
		Moreover, \eqref{AAAa} reduces to
		\begin{equation}\label{AAA0}
			\begin{split}
				\mathrm{dist}_2\bigl(\omega_t,\mathbf O\bigr)
				\leq
				C\varepsilon^{1/6}.
			\end{split}
		\end{equation}
		In this most symmetric regime, the stability deteriorates further from $O(\varepsilon^{1/3})$ to $O(\varepsilon^{1/6})$.

		A similar phenomenon appears when exactly one of $A_1,A_2,A_3$ is zero. Without loss of generality, we may assume that $A_3=0$. In this case, the proof gives
		\begin{equation}\label{CI4}
			{\rm dist}_2\bigl(\omega_t,\mathbf O\bigr)
			<
			\frac{C  \varepsilon^{1/2}}{ A_1A_2}.
		\end{equation}
		If $A_2\to 0$, then \eqref{CI4} degenerates to
		\[
		{\rm dist}_2\bigl(\omega_t,\mathbf O\bigr)
		<
		\frac{C\varepsilon^{1/4}}{A_1^{1/2}}.
		\]
		This corresponds to a further deterioration of the stability estimate from order $O(\varepsilon^{1/2})$ to $O(\varepsilon^{1/4})$.
	\end{remark}

	\section{Non-hexagonal tori}\label{sec5}

In this section, we consider the remaining cases of flat two-dimensional tori for which $\dim(\mathbf E_1)=2$ or $4$. For these two cases, by \cite[Sect.~2]{W252}, we have the following:
	\begin{itemize}
		
		\item[(i)] If $\dim(\mathbf{E}_1) = 2$, then there exists some nonzero vector $\mathbf k$ such that
		\[
		\mathbf{E}_1={\rm span}\left\{\cos(\mathbf k \cdot \mathbf x), \sin(\mathbf k \cdot\mathbf x)\right\}.
		\]
		\item[(ii)] If $\dim(\mathbf{E}_1) = 4$, then there exist two linearly independent vectors $\mathbf k_1, \mathbf k_2$ satisfying $|\mathbf k_1| = |\mathbf k_2|$
		such that
		\[
		\mathbf{E}_1={\rm span}\left\{ \cos(\mathbf k_1 \cdot\mathbf x), \sin(\mathbf k_1 \cdot\mathbf x), \cos(\mathbf k_2 \cdot\mathbf x), \sin(\mathbf k_2 \cdot\mathbf x)\right\}.
		\]
	\end{itemize}

\subsection{2D case}	
	
	\begin{theorem}\label{thm2d}
		Assume that $\dim(\mathbf E_1)=2$.
		Fix $\bar{\omega} \in \mathbf E_1$ of the form
		\begin{equation}\notag
			\bar \omega(\mathbf x)=A \cos(\mathbf k\cdot \mathbf x+\alpha ),\quad A> 0,\,\,\alpha\in\mathbb R.
		\end{equation}
		Then there exist positive constants $\varepsilon_0=\varepsilon_0(\Lambda,\bar\omega)$ and  $C=C(\Lambda,A)$, such that for any $\varepsilon\in(0,\varepsilon_0)$ and any mean-zero $C^1$ solution $\omega(t,\mathbf x)$ to \eqref{euler},
		\[
		{\rm dist}_{2}(\omega(0,\cdot),\mathbf O_{\bar\omega})<\varepsilon
		\quad\Longrightarrow\quad
		{\rm dist}_{2}(\omega(t,\cdot),\mathbf O_{\bar\omega})<C\varepsilon
		\quad\text{for all } t\in\mathbb R.
		\]
	\end{theorem}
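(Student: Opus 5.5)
Since $\dim(\mathbf E_1)=2$, every element of $\mathbf E_1$ has the form $B\cos(\mathbf k\cdot\mathbf x+\beta)$. Moreover, the translational orbit $\mathbf O_{\bar\omega}$ is the whole circle $\{A\cos(\mathbf k\cdot\mathbf x+\theta)\mid\theta\in\mathbb R\}$, because $\mathbf p\mapsto \mathbf k\cdot\mathbf p$ is onto $\mathbb R/2\pi\mathbb Z$ as $\mathbf p$ ranges over $\mathbb T_\Lambda$. Hence no phase information needs to be controlled; the only task is to control the amplitude. The plan follows the skeleton of Section~\ref{sec4}, with the Casimir step reduced to the enstrophy.

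Step 1: Split $\omega_t=v_t+w_t$ as in Proposition~\ref{prop21}. Then
\[
{\rm dist}_2(\omega_t,\mathbf O_{\bar\omega})^2=\|w_t\|_{L^2(\mathbb T_\Lambda)}^2+{\rm dist}_2(v_t,\mathbf O_{\bar\omega})^2 .
\]
After replacing $\bar\omega$ by a suitable element of the orbit, we may assume $\|\omega_0-\bar\omega\|_{L^2(\mathbb T_\Lambda)}<\varepsilon$. Proposition~\ref{prop21} then gives $\|w_t\|_{L^2(\mathbb T_\Lambda)}\le C\|w_0\|_{L^2(\mathbb T_\Lambda)}\le C\varepsilon$.

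Step 2: Write $v_t=B(t)\cos(\mathbf k\cdot\mathbf x+\beta(t))$ with $B(t)\ge0$. Choosing the orbit element with phase $\beta(t)$ and using orthogonality, we get
\[
{\rm dist}_2(v_t,\mathbf O_{\bar\omega})=\sqrt{|\mathbb T_\Lambda|/2}\,|B(t)-A|.
\]

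Step 3: Control $|B-A|$ by the conservation of enstrophy. Directly, $\mathsf C_2(v_t)=\frac{|\mathbb T_\Lambda|}{2}B^2$ and $\mathsf C_2(\bar\omega)=\frac{|\mathbb T_\Lambda|}{2}A^2$, so Lemma~\ref{lemlip} with $k=2$ (whose proof is unchanged in the present setting) gives $|B^2-A^2|<C\varepsilon$. Alternatively, one can avoid the cutoff argument entirely:
\[
\|v_t\|_{L^2(\mathbb T_\Lambda)}^2=\|\omega_0\|_{L^2(\mathbb T_\Lambda)}^2-\|w_t\|_{L^2(\mathbb T_\Lambda)}^2,
\qquad
\bigl|\|\omega_0\|_{L^2(\mathbb T_\Lambda)}-\|\bar\omega\|_{L^2(\mathbb T_\Lambda)}\bigr|<\varepsilon .
\]
Then, using $B+A\ge A>0$,
\[
|B-A|=\frac{|B^2-A^2|}{B+A}\le\frac{C\varepsilon}{A}.
\]
Combining Steps 1–3 yields ${\rm dist}_2(\omega_t,\mathbf O_{\bar\omega})<C\varepsilon$, with $C$ depending only on $\Lambda$ and $A$.

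The one point requiring care is Step 3. The alternative route only gives $\|w_t\|^2_{L^2(\mathbb T_\Lambda)}\le C\varepsilon^2$, which is harmless. The Casimir route produces a constant that depends on $\|\bar\omega\|_{L^2(\mathbb T_\Lambda)}\sim A$, which is also consistent with $C=C(\Lambda,A)$. Finally, $\varepsilon_0$ is only needed to guarantee $\varepsilon<1$ in these bounds. I expect no genuine obstacle: the degeneracy issues of the hexagonal case disappear because the orbit fills the entire energy shell in $\mathbf E_1$.
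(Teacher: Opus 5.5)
Your proposal is correct and follows the paper's proof essentially step for step. You project onto $\mathbf E_1$, reduce ${\rm dist}_2(v_t,\mathbf O_{\bar\omega})$ to a multiple of $|B-A|$ (the orbit fills the whole amplitude-$A$ circle), bound $|B^2-A^2|$ by Lemma~\ref{lemlip} with $k=2$, divide by $B+A\ge A$, and control $w_t$ with Proposition~\ref{prop21}. Your alternative Step~3 via $\|v_t\|_{L^2(\mathbb T_\Lambda)}^2=\|\omega_0\|_{L^2(\mathbb T_\Lambda)}^2-\|w_t\|_{L^2(\mathbb T_\Lambda)}^2$ is also valid; it simply avoids the cutoff construction, which is not needed for the enstrophy.
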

	
\begin{proof}
As in Section \ref{sec4}, denote by \(v_t\) the orthogonal projection of \(\omega_t:=\omega(t,\cdot)\) onto \(\mathbf E_1\).
Then $v_t$ can be expressed as
	\[
	v_t(\mathbf x)= B(t)\cos\bigl(\mathbf k\cdot \mathbf x + \beta(t)\bigr),\quad B(t)\ge 0,\ \beta(t)\in\mathbb R.
	\]
	 For simplicity, write \(B = B(t)\), \(\beta = \beta(t)\) and $\mathbf O = \mathbf O_{\bar\omega}$, where \(\bar\omega\) is as in Theorem \ref{thm2d}.
 Then \({\rm dist}_2(v_t,\mathbf O)\) can be expressed explicitly in terms of the amplitudes \(A, B\):
	\begin{equation}\label{501}
		\begin{split}
			\mathrm{dist}_2\bigl(v_t,\mathbf O\bigr)^2
			=&\min_{ \mathbf p\in\mathbb T_\Lambda}\|v_t(\mathbf x)-\bar\omega(\mathbf x-\mathbf p) \|^2_{L^2(\mathbb T_\Lambda)}
			=C\min_{\theta \in \mathbb{R}} \left|B-Ae^{i( \theta+\alpha-\beta)}\right|^2
			=C|B-A|^2,
		\end{split}
	\end{equation}
where \(C\)  denotes various positive constants depending only on \(\Lambda\) and \(A\).
	 By replacing $\bar\omega$ with another element of its orbit if necessary,
	we may assume that
	\[
	\mathrm{dist}_2(\omega_0,\mathbf O)
	=
	\|\omega_0-\bar\omega\|_{L^2(\mathbb T_\Lambda)}
	<\varepsilon.
	\]
	Then, Lemma \ref{lemlip} with $k=2$ (note that Lemma \ref{lemlip} actually holds for flat 2-tori of arbitrary shape) yields
	\begin{equation}\label{502}
	\left| \mathsf C_2(v_t) - \mathsf C_2(\bar \omega) \right| =\frac{|\mathbb T_\Lambda|}{2}\left|B^2- A^2\right|< C \varepsilon,
	\end{equation}
where we have used
\[\mathsf C_2(\bar \omega)= \frac{|\mathbb T_\Lambda|A^2}{2}\]
by a straightforward computation.
From \eqref{501} and \eqref{502}, we get
	\[
	\mathrm{dist}_2\bigl(v_t,\mathbf O\bigr) <C|B-A|<\frac{C\varepsilon}{B+A}< \frac{C\varepsilon}{A},
	\]
	and the desired conclusion follows.
\end{proof}	
	
\subsection{4D case}	
	\begin{theorem}\label{thm4d}
		Assume that $\dim(\mathbf E_1)=4$. Fix $\bar{\omega} \in \mathbf E_1$ of the form
		\begin{equation}\notag
			\bar \omega(\mathbf x)=\sum_{i=1}^2A_i\cos(\mathbf k_i\cdot \mathbf x+\alpha_i),\quad A_i\geq 0,\,\,\alpha_i\in\mathbb R,\,\, (A_1,A_2)\neq (0,0).
		\end{equation}
	Then there exist positive constants $\varepsilon_0=\varepsilon_0(\Lambda,\bar\omega)$ and  $C=C(\Lambda,A_1,A_2)$, such that for any $\varepsilon\in(0,\varepsilon_0)$ and any mean-zero $C^1$ solution $\omega(t,\mathbf x)$ to \eqref{euler},
		\[
		{\rm dist}_{2}(\omega(0,\cdot),\mathbf O_{\bar\omega})<\varepsilon
		\quad\Longrightarrow\quad
		{\rm dist}_{2}(\omega(t,\cdot),\mathbf O_{\bar\omega})<C\varepsilon^\gamma
		\quad\text{for all } t\in\mathbb R,
		\]
		where $\gamma=1$ if $A_1A_2\neq 0$ and $A_1 \neq A_2$, and $\gamma= {1}/{2}$ otherwise.
	\end{theorem}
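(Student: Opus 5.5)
The plan is to follow the scheme of Section \ref{sec4}, which simplifies considerably because there is no resonant triad. Let \(v_t\) be the orthogonal projection of \(\omega_t\) onto \(\mathbf E_1\), written as
\[
v_t=\sum_{i=1}^2 B_i(t)\cos(\mathbf k_i\cdot\mathbf x+\beta_i(t)),\qquad B_i\ge0 .
\]
By Proposition \ref{prop21}, it suffices to bound \({\rm dist}_2(v_t,\mathbf O)\).

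\textbf{Step 1: reduce the distance to amplitudes.} Since \(\mathbf k_1,\mathbf k_2\) are linearly independent, the map \(\mathbf p\mapsto(\mathbf k_1\cdot\mathbf p,\mathbf k_2\cdot\mathbf p)\) modulo \(2\pi\) is onto \(\mathbb T^2\). Both phases can therefore be adjusted independently. Repeating the computation leading to \eqref{min}, we get
\[
{\rm dist}_2(v_t,\mathbf O)^2=C\min_{\theta_1,\theta_2}\sum_{i=1}^2\bigl|B_i-A_ie^{i\theta_i}\bigr|^2=C\bigl(|B_1-A_1|^2+|B_2-A_2|^2\bigr).
\]
So no phase estimate is needed at all.

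\textbf{Step 2: control the symmetric functions.} Normalize so that \(\|\omega_0-\bar\omega\|_{L^2}<\varepsilon\). Lemma \ref{lemlip} holds on any flat torus. Apply it with \(k=2,4\), where a direct computation (for \(\mathbf k_1\neq\pm\mathbf k_2\) and \(\mathbf k_1\pm\mathbf k_2\) not producing resonances among the frequencies) gives
\[
\mathsf C_2=\tfrac{|\mathbb T_\Lambda|}{2}(A_1^2+A_2^2),\qquad
\mathsf C_4=\tfrac{3|\mathbb T_\Lambda|}{8}(A_1^4+A_2^4+4A_1^2A_2^2).
\]
These formulas are independent of the phases. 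I should double-check the coefficient 4: it comes from the cross term \(6\cdot\frac12\cdot\frac12\cdot\) normalization, as in \eqref{C2346} with \(A_3=0\). As in the proof of Lemma \ref{c3-f-control}, this yields
\[
|f_i(\mathbf a)-f_i(\mathbf b)|<C\varepsilon,\qquad i=1,2,
\]
where \(\mathbf a=(A_1^2,A_2^2)\), \(\mathbf b=(B_1^2,B_2^2)\) and \(f_1,f_2\) are the two-variable elementary symmetric polynomials. The bounds use \eqref{ababsv}.

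\textbf{Step 3: apply Corollary \ref{ab2}.} Use it together with the continuity/bootstrap argument of Lemma \ref{abk}: \(|b_k(0)-a_k|<C\varepsilon\), and the estimate forbids crossing the level \(\delta/2\).

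If \(A_1\ne A_2\), part (i) gives \(|B_k^2-A_k^2|<C\varepsilon/|A_1^2-A_2^2|\).
\begin{itemize}
\item[(a)] When \(A_1A_2\neq0\), dividing by \(B_k+A_k\ge A_k\) gives \(|B_k-A_k|<C\varepsilon\), so \(\gamma=1\).
\item[(b)] When one amplitude vanishes, say \(A_2=0<A_1\), we still get \(|B_1-A_1|<C\varepsilon/A_1^3\). However \(B_2^2<C\varepsilon/A_1^2\) only yields \(B_2<C\varepsilon^{1/2}/A_1\), so \(\gamma=1/2\).
\end{itemize}
If \(A_1=A_2>0\), part (ii) gives \(|B_k^2-A_k^2|<C\varepsilon^{1/2}\), hence \(|B_k-A_k|<C\varepsilon^{1/2}/A_1\) and \(\gamma=1/2\).

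\textbf{Step 4: conclude.} Combine with Proposition \ref{prop21}, shrinking \(\varepsilon_0\) so that the lower-order terms are absorbed, since \(\varepsilon\le\varepsilon^{1/2}\).

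\textbf{Main obstacle.} The only delicate point is the Casimir computation in Step 2. One must make sure that no frequency resonance produces phase-dependent terms in \(\mathsf C_4\). Such a term would require a relation like \(2\mathbf k_1=\pm2\mathbf k_2\) or \(\mathbf k_1\pm\mathbf k_2\) among the relevant integer combinations summing to zero, which linear independence rules out. Fourth-order products \(\pm\mathbf k_1\pm\mathbf k_1\pm\mathbf k_2\pm\mathbf k_2\) vanish only when the \(\mathbf k_1\) signs cancel and the \(\mathbf k_2\) signs cancel, so \(\mathsf C_4\) depends only on amplitudes. Everything else is a direct specialization of Section \ref{sec4}.
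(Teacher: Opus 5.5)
Your proposal is correct and follows essentially the same route as the paper: you reduce ${\rm dist}_2(v_t,\mathbf O_{\bar\omega})$ to $|B_1-A_1|+|B_2-A_2|$, control $f_1,f_2$ of $(B_1^2,B_2^2)$ through $\mathsf C_2,\mathsf C_4$ and Lemma \ref{lemlip}, and then apply Corollary \ref{ab2} with the continuity argument in the same three cases. The coefficient $4$ you flagged is right, since the cross term gives $6\cdot\frac14 A_1^2A_2^2=\frac38\cdot 4A_1^2A_2^2$, and linear independence of $\mathbf k_1,\mathbf k_2$ also kills the odd cross terms $A_1^3A_2$ and $A_1A_2^3$, which your resonance discussion did not list explicitly.
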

	
\begin{proof}
	In this case, write
	\[
	v_t(\mathbf x)=\sum_{i=1}^{2} B_i(t) \cos\left(\mathbf k_i \cdot \mathbf x + \beta_i(t)\right),\quad B_i(t)\geq0,\ \beta_i(t)\in\mathbb R.
	\]
	Then
	\begin{equation}\notag
		\begin{split}
			\mathrm{dist}_2\bigl(v_t,\mathbf O_{\bar\omega}\bigr)^2
			=&\min_{\mathbf p\in\mathbb T_\Lambda}\|v_t(\mathbf x)-\bar\omega(\mathbf x-\mathbf p)\|_{L^2(\mathbb T_\Lambda)}^2 \\
			=&C\min_{\theta_1, \,\theta_2 \in \mathbb{R}} \left(
			\left|B_1e^{i\beta_1}-A_1 e^{i(\theta_1+\alpha_1)}\right|^2+
			\left|B_2e^{i\beta_2}-A_2 e^{i(\theta_2+\alpha_2)}\right|^2\right)\\
			=&C\left(\left|B_1- A_1\right|^2+\left|B_2- A_2\right|^2\right),
		\end{split}
	\end{equation}
which yields
	\begin{equation}\notag
		\begin{split}
			\mathrm{dist}_2\bigl(v_t,\mathbf O_{\bar\omega}\bigr)
			\leq C\left(\left|B_1- A_1\right|+\left|B_2- A_2\right|\right).
		\end{split}
	\end{equation}
	By  a straightforward computation, we have that
	\begin{equation}\label{C24}
		\begin{split}
			&\mathsf C_2(\bar \omega)= \frac{|\mathbb T_\Lambda|}{2}(A_1^2 + A_2^2);\\
			&\mathsf C_4(\bar \omega)= \frac{3|\mathbb T_\Lambda|}{8}\left( A_1^4 + 4A_1^2A_2^2 + A_2^4 \right).
		\end{split}
	\end{equation}
	From \eqref{C24}, we see that $\mathbf a=(A_1^2,A_2^2)$ satisfies the following system:
	\begin{equation}\label{4df123b}
		\begin{cases}
			f_1(\mathbf a)= \frac{2}{|\mathbb T_\Lambda|} \mathsf C_2(\bar \omega), \\
			f_1^2(\mathbf a)+2f_2(\mathbf a) =\frac{8}{3|\mathbb T_\Lambda|}\mathsf C_4(\bar \omega).
		\end{cases}
	\end{equation}
	Similarly, $\mathbf b=(B_1^2,B_2^2)$ satisfies
	\begin{equation}\label{4df123t}
		\begin{cases}
			f_1(\mathbf b) = \frac{2}{|\mathbb T_\Lambda|} \mathsf C_2(v_t), \\
			f_1^2(\mathbf b)+2f_2(\mathbf b)=\frac{8}{3|\mathbb T_\Lambda|}\mathsf C_4(v_t).
		\end{cases}
	\end{equation}
As in the proof of Theorem \ref{thm2d}, we may assume that ${\rm dist}_2(\omega_0,\mathbf O)= \|\omega_0-\bar\omega\|_{L^2(\mathbb T_\Lambda)}<\varepsilon$. Then, in view of
	 \eqref{4df123b} and \eqref{4df123t}, we can apply  Lemma \ref{lemlip} to get
	\begin{equation}\label{4d-f-control}
		\left|f_i(\mathbf a)-f_i(\mathbf b)\right|< C\varepsilon,\quad i=1,2,
	\end{equation}
	where $f_1,f_2$ are the elementary symmetric polynomials in two variables.
	As in the previous case, a continuity argument shows that, for $\varepsilon$ sufficiently small, $b_k(t)$ remains in a sufficiently small neighborhood of $a_k$ for all $t\in\mathbb R$. Hence the assumptions of Corollary \ref{ab2} are satisfied. In view of  Corollary \ref{ab2} and \eqref{4d-f-control}, we distinguish three cases:
	\begin{itemize}
		
		\item[(1)]  $A_1\neq  A_2$ and $ A_1 A_2\neq 0$. In this case,
		\[
		\left|B_i^2-A_i^2\right|<\frac{C \varepsilon}{\left|A_1^2-A_2^2\right|} \quad\Longrightarrow\quad \left|B_i-A_i\right|<\frac{C \varepsilon}{|A_1^2-A_2^2|\,A_i},
		\]
		where $i=1,2.$
		Therefore,
		\[
		\mathrm{dist}_2\bigl(v_t,\mathbf O\bigr)
		< \sum_{i=1}^2\frac{C\varepsilon}{A_i\,|A_1^2-A_2^2|};
		\]
		
		\item[(2)] Exactly one of $ A_1, A_2$ vanishes. In this case, without loss of generality, we may assume that $A_2=0$. We obtain
		\[
		|B_1^2-A_1^2|<\frac{C\varepsilon}{A_1^2}\quad\Longrightarrow\quad |B_1-A_1|<\frac{C\varepsilon}{A_1^3},
		\qquad
		|B_2|^2< \frac{C\varepsilon}{A_1^2}\quad\Longrightarrow\quad|B_2|< \frac{C\varepsilon^{1/2}}{A_1}.
		\]
		Hence
		\[
		\mathrm{dist}_2\bigl(v_t,\mathbf O\bigr)
		< \frac{C\varepsilon}{A_1^3}+\frac{C\varepsilon^{1/2}}{A_1}\leq \frac{C\varepsilon^{1/2}}{A_1};
		\]
		
		\item[(3)]  $ A_1= A_2\neq 0$. In this case,
		\[
		|B_i^2-A_i^2|<C\varepsilon^{1/2}\quad\Longrightarrow\quad |B_i-A_i|<\frac{C\varepsilon^{1/2} }{A_i},
		\]
		where $i=1,2.$
		Therefore,
		\[
		\mathrm{dist}_2\bigl(v_t,\mathbf O\bigr)
		< \frac{C\varepsilon^{1/2} }{A_1}.
		\]
	\end{itemize}
	The proof is complete.
\end{proof}

	\bigskip
	\noindent{\bf Acknowledgements:}
	G. Wang was supported by National Natural Science Foundation of China
	(Grant No. 12471101)
	and Fundamental Research Funds for the Central Universities
	(Grant No. DUT23RC(3)077).

	\bigskip
	\noindent{\bf  Data Availability} Data sharing not applicable to this article as no datasets were generated or analysed during the current study.

	\bigskip
	\noindent{\bf Conflict of interest}    The authors declare that they have no conflict of interest to this work.
	
	\phantom{s}
	\thispagestyle{empty}

\end{document}